\documentclass[11pt]{article}

\usepackage[a4paper,margin=1.12in]{geometry}
\usepackage[T1]{fontenc}
\usepackage[utf8]{inputenc}
\usepackage{lmodern}
\usepackage{microtype}
\usepackage{amsmath,amssymb,amsthm,mathtools}
\usepackage{booktabs}
\usepackage{enumitem}
\usepackage{hyperref}

\hypersetup{colorlinks=true,linkcolor=blue,citecolor=blue,urlcolor=blue}
\setlist{nosep}
\allowdisplaybreaks

\newtheorem{theorem}{Theorem}[section]
\newtheorem{proposition}[theorem]{Proposition}
\newtheorem{lemma}[theorem]{Lemma}
\newtheorem{corollary}[theorem]{Corollary}

\theoremstyle{definition}

\theoremstyle{remark}

\newcommand{\E}{\mathbb{E}}
\newcommand{\Pp}{\mathbb{P}}
\newcommand{\1}{\mathbf{1}}
\newcommand{\Lcal}{\mathcal{L}}

\newcommand{\dens}{\operatorname{dens}}

\newcommand{\Bohr}{\operatorname{Bohr}}

\newcommand{\eps}{\varepsilon}

\title{Branch-Sensitive Density Increments for $k$-Configurations}
\author{Yao Zhi}

\begin{document}
\maketitle

\begin{abstract}
We refine the density-increment argument for $k$-configurations developed by Beker.  The point is not to introduce a new inverse theorem, but to retain information that is lost when the graph-counting output is compressed into a single uniform alternative.  For the transitive $K_k$ appearing in the $k$-configuration problem we keep a fixed moment throughout the graph recursion and obtain three genuinely different outputs: a row-degree anomaly, a non-centred grid anomaly, and a centred grid anomaly.  The centred case is treated directly through positivity of the associated Gram moments, avoiding the usual unbalancing step.  Each output is then routed through the appropriate local Kelley--Meka mechanism with its own density gain, rank cost, and width cost.  A weighted iteration budget prevents the most expensive local branches from being charged the maximal number of times.

For a finite abelian group $G$ of odd order and $A\subseteq G$ of density $\alpha$, write $L=\log(2/\alpha)$ and $\Lambda=L+\log(2k)$.  We obtain a lower bound of the form
\[
\Pp_{x_1,\ldots,x_k\in G}\!\left(\frac{x_i+x_j}{2}\in A\ \text{for all }1\le i\le j\le k\right)
\ge \exp\!\bigl(-C\Phi(k,L,\Lambda)\bigr),
\]
where
\[
\Phi(k,L,\Lambda)=k^{36}L^6\Lambda^6+k^{34}L^8\Lambda^5+k^{32}L^{14}\Lambda+k^{30}L^{16}.
\]
Consequently, in the Erd\H{o}s--Moser application one may replace the exponent $68$ in Beker's $k$-configuration route by $36$; in particular the same reduction yields the sum-free lower bound with every exponent $c<1/36$.
\end{abstract}

\medskip
\noindent\textbf{2020 Mathematics Subject Classification.} Primary 11B30; Secondary 11B75.

\noindent\textbf{Keywords.} arithmetic combinatorics; density increments; $k$-configurations; Erd\H{o}s--Moser problem; Bohr sets; Kelley--Meka method.

\section{Introduction and main results}

\subsection{The Erd\H{o}s--Moser problem and \texorpdfstring{$k$}{k}-configurations}

For finite sets $B\subseteq A\subseteq\mathbb R$, say that $B$ is \emph{sum-free with respect to $A$} if
\[
\{b+b':b,b'\in B,\ b\ne b'\}\cap A=\varnothing.
\]
The problem traditionally attributed to Erd\H{o}s and Moser asks for the largest function $f(n)$ such that every $n$-element set of integers contains a subset of size at least $f(n)$ which is sum-free with respect to the ambient set; see Erd\H{o}s' survey \cite{Erdos1965} and the subsequent work of Sudakov--Szemer\'edi--Vu \cite{SSV2005}.  Writing $\phi(A)$ for the largest size of such a subset of $A$ and $\phi(n)=\min_{|A|=n}\phi(A)$, the elementary scale is logarithmic.  Ruzsa constructed examples showing $\phi(n)\le \exp(O(\sqrt{\log n}))$ \cite{Ruzsa2005}, while Sudakov--Szemer\'edi--Vu proved the first superlogarithmic lower bound $\phi(n)\ge g(n)\log n$ with $g(n)\to\infty$ \cite{SSV2005}.  Sanders later established the substantially stronger form
\[
\phi(n)\ge (\log n)^{1+c}
\]
for some absolute $c>0$ \cite{Sanders2021}.  His argument uses a structural iteration in additive combinatorics rather than a direct quantitative theorem for the linear configurations considered below.

A second line of work studies the linear pattern that enters one route to this sum-free problem.  A \emph{$k$-configuration} in an abelian group in which multiplication by $2$ is invertible is generated by elements $x_1,\ldots,x_k$ together with all pairwise arithmetic means $(x_i+x_j)/2$; it is non-degenerate when the generators are pairwise distinct.  For comparison of quantitative results, let $F(\alpha,k)$ denote the least $N_0$ such that, for every $N\ge N_0$, every set $A\subseteq[N]$ of density at least $\alpha$ contains a non-degenerate $k$-configuration.  Dousse developed a Roth-type density-increment argument for these configurations and obtained a density threshold of iterated-logarithmic type \cite{Dousse2013}.  Shao, exploiting the fact that the defining system has complexity one and working efficiently inside Bohr sets, improved this to a logarithmic-density threshold: for each fixed $k$, a set $A\subseteq[N]$ of density $\alpha\gg (\log N)^{-c(k)}$ contains a non-degenerate $k$-configuration \cite{Shao2015}.  In the quantitative form relevant here this corresponds to a bound of the shape
\[
N\ge \exp\!\bigl((2/\alpha)^{O(k^2)}\bigr),
\]
as recorded and sharpened in Beker's work \cite{Beker2025}.

The analytic landscape changed with the Kelley--Meka breakthrough for three-term progressions \cite{KelleyMeka2023} and its Bohr-set exposition by Bloom--Sisask \cite{BloomSisask2023}.  Filmus--Hatami--Hosseini--Kelman extended the underlying sparse-counting and density-increment technology to binary systems of linear forms \cite{FHHK2024}.  Since the equations defining a $k$-configuration form such a binary system, Beker combined these tools with the Erd\H{o}s--Moser structural reduction and proved that a subset of $[N]$ of density $\alpha$ contains a non-degenerate $k$-configuration once
\[
N\ge \exp\!\bigl(Ck^{68}\log(2/\alpha)^{16}\bigr).
\]
Equivalently,
\[
F(\alpha,k)\le \exp\!\bigl(Ck^{68}\Lcal(\alpha)^{16}\bigr).
\]
He consequently obtained the explicit Erd\H{o}s--Moser exponent $c<1/68$ along this $k$-configuration route \cite[Theorems~1.1 and~1.2]{Beker2025}.  This should be distinguished from Sanders' earlier existence of an unspecified absolute exponent: Beker's contribution is a new route with explicit quantitative dependence on the configuration parameter.

\subsection{The branch-sensitive refinement}

The quantitative question addressed in this paper is whether Beker's argument loses avoidable information before the final iteration.  No new inverse theorem is introduced.  Instead, we retain the different outcomes of the sparse graph-counting recursion and charge each outcome according to its actual density gain and Bohr-set cost.  Two losses in Beker's implementation are especially relevant.  First, structurally different graph anomalies are converted to a common worst-case alternative.  Second, once a local increment is obtained, a uniform iteration bound effectively charges every step at the weakest density gain.  These two worst cases need not occur simultaneously.

For the transitive $K_k$ associated with a $k$-configuration, we run the graph recursion at a fixed moment and stop before the final unbalancing step.  This produces three outputs: a row-degree anomaly, a non-centred grid anomaly, and a centred grid anomaly.  The centred output is handled directly by positivity of Gram moments, while the non-centred output is routed through sparse rectangle extraction.  The three branches then enter the appropriate local Kelley--Meka backends with different density gains, rank costs, and width costs.  A weighted budget based on the sum of the actual gains keeps these costs separated through the global iteration.  The resulting improvement is therefore a refinement of Beker's density-increment architecture rather than a replacement of its analytic inputs.

Throughout, for $u\in(0,1]$ we write $\Lcal(u)=\log(2/u)$.  Given $\alpha\in(0,1]$ and $k\ge2$, we abbreviate
\[
L=\Lcal(\alpha),\qquad \Lambda=L+\log(2k).
\]
Our finite-group result is as follows.

\begin{theorem}[Finite-group configuration bound]\label{thm:finite-group}
There is an absolute constant $C>0$ with the following property.  Let $G$ be a finite abelian group of odd order, let $k\ge2$, and let $A\subseteq G$ have density $\alpha>0$.  Then
\begin{equation}\label{eq:main-count}
\Pp_{x_1,\ldots,x_k\in G}\!\left(\frac{x_i+x_j}{2}\in A\ \text{for all }1\le i\le j\le k\right)
\ge \exp\!\bigl(-C\Phi(k,L,\Lambda)\bigr),
\end{equation}
where
\begin{equation}\label{eq:Phi}
\Phi(k,L,\Lambda)
= k^{36}L^6\Lambda^6+k^{34}L^8\Lambda^5+k^{32}L^{14}\Lambda+k^{30}L^{16}.
\end{equation}
In particular, if $A$ contains no non-degenerate $k$-configuration, then
\[
|G|\le \exp\!\bigl(C\Phi(k,L,\Lambda)\bigr).
\]
\end{theorem}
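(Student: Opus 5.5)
We will run a Bohr-set density-increment iteration of Kelley--Meka type, following Beker's architecture, and prove a relative version of \eqref{eq:main-count}. Concretely, the relative statement is as follows. Let $B$ be a regular Bohr set of rank $d$ and width $\nu$, and let $A'\subseteq x+B$ have relative density $\delta\ge\alpha$. Then either the relative $k$-configuration count is at least $\tfrac12\delta^{k(k+1)/2}$ times the appropriate Bohr-set normalisation, or we pass to a regular Bohr subset $B'$ on which $A'$ has increased density. The key point is that the increment comes in one of three \emph{typed} forms.

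To produce these types, we write the configuration count as a weighted count of the transitive $K_k$ whose edges $\{i,j\}$ carry $1_{A}((x_i+x_j)/2)$ (with loops for $i=j$). We then run the sparse graph-counting recursion at a fixed moment $p\asymp k\Lambda$. If the count is deficient, the recursion stops before the final unbalancing step and outputs one of three anomalies:
\begin{itemize}
\item a row-degree anomaly, i.e.\ a large $p$-th moment of neighbourhood densities;
\item a non-centred grid anomaly;
\item a centred grid anomaly, i.e.\ large Gram moments $\E(\sum \bar f f)^{p}$ of the balanced function.
\end{itemize}
In the centred case we use positivity of the Gram moments directly, which lets us skip unbalancing. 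The non-centred case goes through sparse rectangle extraction. Each anomaly is fed into the corresponding local Kelley--Meka backend, namely almost-periodicity (Croot--Sisask) plus dependent random choice. The output is an increment $\delta\mapsto\delta(1+\eta_b)$ on a Bohr set of rank $d+r_b$ and width $\nu\cdot w_b$. Here, for branch $b$, the gain $\eta_b$, the rank cost $r_b$ and the width loss $\log(1/w_b)$ are explicit monomials in $k,L,\Lambda$.

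The bookkeeping replaces the uniform step count. The density cannot exceed $1$, so $\sum_{\text{steps}}\log(1+\eta_{b_t})\le L$. For each branch type $b$ this gives the bound $N_b\lesssim L/\eta_b$ on the number of times it can be used (up to a $\log$ factor for the dyadic density scale). The final rank is $d_{\mathrm{fin}}\lesssim\sum_b N_b r_b$ and the final width loss is $\sum_b N_b\log(1/w_b)$. When the iteration terminates, the relative count is at least $\delta^{O(k^2)}$ times $\mu(B_{\mathrm{fin}})^{O(k)}$. Using the standard estimate $\mu(B)\ge \nu^{d}\,2^{-O(d)}$ we get
\[
\log(1/\text{count})\lesssim k^2L + k\, d_{\mathrm{fin}}\log(1/\nu_{\mathrm{fin}}).
\]
Expanding the product of the rank total and the width total over the branch pairs gives the four terms of $\Phi(k,L,\Lambda)$ in \eqref{eq:Phi}. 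The dominant term $k^{36}L^6\Lambda^6$ comes from the cheap, frequent branches; the $k^{30}L^{16}$ term comes from the expensive branch being charged only $L/\eta_b$ times. The degenerate configurations, those with $x_i=x_j$, contribute at most $k^2/|G|$. Hence if $A$ has no non-degenerate $k$-configuration, then $\exp(-C\Phi)\le k^2/|G|$, which gives the stated bound on $|G|$ after adjusting $C$.

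The main obstacle is the centred grid branch. We must show that positivity of the Gram moments at the fixed moment $p$ yields a genuine density increment without unbalancing. The difficulty is that the balanced function has mean zero, so a large even moment must be converted into a positive correlation of $1_A$ with a structured set. We would first try writing the Gram moment as $\|\sum_y f_y\otimes \bar f_y\|$ in a Schatten-type norm. Its positivity then gives a positive-definite kernel, whose large trace forces a large positive eigen-direction. That eigen-direction can be fed into almost-periodicity to obtain an increment with $\eta\asymp 1/k$ rather than $1/k^2$. Checking that the resulting rank and width costs are compatible with the weighted budget is the point we expect to require care.
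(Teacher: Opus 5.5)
Your outline has the same architecture as the paper: a fixed-moment trichotomy, Gram positivity for the centred output, sparse rectangle extraction for the non-centred one, and a gain-weighted budget. But the step you flag as the main obstacle is left open, and the route you sketch would not close it. The centred hypothesis is
\[
\|J\|_{U(2,P)}^{2P}=\E_{x,x'}K(x,x')^P,\qquad K(x,x')=\E_yJ(x,y)J(x',y).
\]
This is an \emph{entrywise} moment of the Gram kernel, not a Schatten norm (a Schatten norm counts even cycles). Your eigen-direction route extracts no information from it:
\begin{itemize}
\item $K$ is positive semidefinite for every graph.
\item Its normalised trace $\E_{x,y}J^2$ is already about $\rho^{-1}$ for a random graph.
\item You do not say how an eigenvector, which is a real function, becomes the set-level input the local backends require: a carrier with a degree defect, or a rectangle $C\times T$ of elevated edge density.
\item Nothing supports strength $1/k$. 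The hypothesis only guarantees Gram entries of size $\eta^2\asymp k^{-4}$.
\end{itemize}

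The paper's argument is elementary. Every moment $\E_{x,x'}K^q=\E_{y_1,\ldots,y_q}\bigl(\E_x\prod_aJ(x,y_a)\bigr)^2$ is nonnegative. At the odd exponent $P+1$ this gives $\E K_+^{P+1}\ge\E K_-^{P+1}$. Together with $|K|\le\rho^{-2}$ and $2^P\ge4\rho^{-2}\eta^{-2}$, one gets $\|K_+\|_P\ge\eta^2/2$, hence $\Pp(K\ge\tau)\ge(\eta^2\rho^2/4)^P$ for $\tau=\eta^2/4$. Now fix a good row $x_0$ and take $T$ to be its neighbourhood. This turns the codegree excess into a rectangle $C\times T$ of density $(1+\tau/4)\rho$, or else a set of rows of degree $>2\rho$ or $\le(1-\tau/8)\rho$. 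The result is gain $\asymp k^{-4}$ at rank cost $O(k^{20}L^3\Lambda^5)$. The ratio $k^{24}L^3\Lambda^5$ is the rank bottleneck behind the leading term of $\Phi$.

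The bookkeeping is also not a derivation.
\begin{itemize}
\item You never state the branch gains and costs, so ``expanding the product gives the four terms of $\Phi$'' is an assertion.
\item Your rank update $d\mapsto d+r_b$ is wrong here. The local step works in $B\cap(2\cdot B)$ and may return an increment on $2\widetilde A_j$, which must be pulled back by $x\mapsto x/2$. So one only has $\Gamma'\subseteq\Gamma\cup\Gamma\psi_2^{\pm1}\cup\Delta$, which naively doubles the rank at every step. The paper uses Beker's orbit bound $d_t\le\sum_r(2(t-r)+1)|\Delta_r|$. This costs an extra factor $t\ll k^5L$ that appears in every term of $\Phi$.
\item That $t$ is set by a fourth branch you omit: local uniformisation, with gain only $\asymp k^{-5}$ and no new frequencies. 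It is indispensable. It pins the endpoint densities to $(1\pm\gamma)\alpha$ with $\gamma\asymp k^{-4}$, and, with the pre-graph edge-discrepancy case, the edge densities to $(1\pm\delta/4)\alpha$. Without this, anomalies of size $k^{-4}$ cannot be converted into normalised correlation anomalies, nor back into increments over $\alpha$. It also supplies the dominant width ratio $W_U/g_U\asymp k^6\Lambda$, hence the $k^6L\Lambda$ term in $\log(1/\rho_t)\ll k^6L\Lambda+k^4L^3$.
\item The moment $p\asymp k\Lambda$ is too small for the recursion you invoke. Its preliminary H\"older step controls the rest of the graph in $L^{p/(p-1)}$ and loses a factor of order $\alpha_*^{-|E(H)|/p}$, with $|E(H)|$ up to $\binom k2$. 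So one needs $P\gtrsim k^2L$, which is what the paper uses.
\end{itemize}
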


The passage from finite groups to intervals is the standard cyclic embedding used in \cite{Beker2025}.  It gives the following consequence.

\begin{corollary}[Integer version]\label{cor:interval}
There is an absolute constant $C>0$ such that the following holds.  If $A\subseteq[N]$ has density at least $\alpha$ and
\[
N\ge \exp\!\bigl(C\Phi(k,\Lcal(\alpha),\Lcal(\alpha)+\log(2k))\bigr),
\]
then $A$ contains a non-degenerate $k$-configuration.
\end{corollary}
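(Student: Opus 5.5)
The statement to prove is Corollary~\ref{cor:interval}. I would deduce it from Theorem~\ref{thm:finite-group} by the standard cyclic embedding, as in \cite{Beker2025}. Given $A\subseteq[N]$ of density at least $\alpha$, choose an odd prime $p$, or any odd modulus, with $2N<p\le 4N$, and view $A$ as a subset of $G=\mathbb Z/p\mathbb Z$. Its density there is $\alpha'=|A|/p\ge \alpha/4$. Since $\Lcal(\alpha/4)\le \Lcal(\alpha)+\log 4\le 3\Lcal(\alpha)$, the quantities $L'$ and $\Lambda'$ attached to $\alpha'$ are at most constant multiples of $L$ and $\Lambda$. Because $\Phi$ is a sum of monomials of bounded total degree, we get $\Phi(k,L',\Lambda')\le C_0\Phi(k,L,\Lambda)$ for an absolute $C_0$.

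The main point is that configurations found in $\mathbb Z/p\mathbb Z$ must lift to genuine configurations in $\mathbb Z$. The wrap-around issue is that $(x_i+x_j)/2$ computed mod $p$ need not equal the integer midpoint. To handle this, I would restrict to $x_i$ lying in the interval $[1,N]$ with $x_i\equiv x_j \pmod 2$ for all $i,j$. Then every midpoint is an integer in $[1,N]$ and agrees with the residue computed mod $p$, since $p>2N$. Rather than restricting $x$ directly, I would make this work by applying Theorem~\ref{thm:finite-group} to a modified set. Concretely, split $A$ by parity and by the position of elements, and pass to a translate-and-dilate: take $A_0=\{a: 2a+r\in A\}$ for the better residue $r\in\{0,1\}$. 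This set has density at least $\alpha/2$ in $[N/2]$. Embed it in $\mathbb Z/p$ with $p$ odd and $p>2N$, using Fourier-free counting. A configuration of $A_0$ in $\mathbb Z_p$ whose generators all lie in $[N/2]$ then has integer midpoints, because the sums are less than $p$ and reduction mod $p$ of $(x_i+x_j)\cdot 2^{-1}$ agrees with the integer midpoint when $x_i+x_j$ is even. Here I expect the parity bookkeeping to be the main obstacle. The cleanest fix is to count configurations with all $x_i$ in a short interval: apply the theorem to $A\cap I$ for a subinterval $I$ of length $N/(4k)$ inside which $A$ still has density at least $\alpha$, which exists by averaging. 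Then any solution with all points in $I$ has $x_i+x_j$ spanning less than $p$ with no wrap, and the congruence $2m\equiv x_i+x_j$ with $m\in A$ forces $x_i+x_j$ to be even, up to a single wrap which I rule out by the size of $I$.

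For the counting step, I would use the density-$\alpha$ set $A'=A\cap I\subseteq \mathbb Z_p$. This set has density about $\alpha/(8k)$ in $\mathbb Z_p$, which only enlarges $L$ by $\log(8k)$, and that is absorbed since $\Lambda$ already contains $\log(2k)$. The point configurations with all midpoints in $A'$, in particular all $x_i=(x_i+x_i)/2\in A'\subseteq I$, are lifts. The theorem gives at least $p^k\exp(-C\Phi)$ such tuples. Degenerate tuples, those with some $x_i=x_j$, number at most $\binom k2 p^{k-1}$. Hence if $p\ge k^2\exp(C\Phi)$, which follows from the hypothesis on $N$ after enlarging $C$ (since $\Phi\ge k^{30}\ge 2\log k$), a non-degenerate configuration exists. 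Each of its midpoints is an integer element of $A$, so it is a non-degenerate $k$-configuration in $[N]$.
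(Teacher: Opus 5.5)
\textbf{There is a genuine gap: your final route does not reach the stated threshold.} By restricting to a subinterval $I$ of length about $N/(4k)$, the set $A\cap I$ has density only $\gg\alpha/k$ in $\mathbb Z/p\mathbb Z$. Theorem~\ref{thm:finite-group} is then applied with $L'=\Lcal(\alpha)+\log(16k)\ge\Lambda$ in place of $L$. This loss is \emph{not} absorbed by enlarging an absolute constant, because $\Phi$ is not symmetric in its last two arguments. With $L'\asymp\Lambda$ you get terms such as $k^{36}\Lambda^{12}$ and $k^{30}\Lambda^{16}$, whereas $\Phi(k,L,\Lambda)$ only contains $k^{36}L^6\Lambda^6$ and $k^{30}L^{16}$. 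For fixed $\alpha$ and $k\to\infty$ the ratio of exponents is of order $(\log k)^6$, which is unbounded. So you prove only the weaker threshold $N\ge\exp(C\Phi(k,\Lambda,\Lambda))$, not the corollary as stated. The sentence ``that is absorbed since $\Lambda$ already contains $\log(2k)$'' is exactly where this fails. (The weaker form would still suffice for the Erd\H{o}s--Moser application, where $L\asymp\log k$, but that is not the statement.)

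\textbf{The short interval, and the parity and dilation detours before it, are unnecessary, because the wraparound problem does not occur.} The defining condition includes $i=j$, so every generator $x_i=(x_i+x_i)/2$ lies in $A\subseteq[1,N]$, and so does every midpoint $z_{ij}$. If $x_i+x_j\equiv 2z_{ij}\pmod p$ with $x_i,x_j,z_{ij}\in[1,N]$, then $|x_i+x_j-2z_{ij}|\le 2N-2<p$. Hence $x_i+x_j=2z_{ij}$ holds in $\mathbb Z$, and distinct residues in $[1,N]$ are distinct integers. This is precisely the paper's argument, which uses the odd modulus $2N+1$ and density $\ge\alpha/3$. Your first paragraph already shows that such a constant-factor density loss is absorbed, via $\Lcal(\alpha/4)\le3\Lcal(\alpha)$. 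To repair the proof, keep that paragraph and your degenerate-tuple count, and replace everything in between with this one-line no-wraparound observation applied to $A$ itself.
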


In terms of the extremal threshold introduced above, Corollary~\ref{cor:interval} gives
\[
F(\alpha,k)\le
\exp\!\bigl(C\Phi(k,\Lcal(\alpha),\Lcal(\alpha)+\log(2k))\bigr).
\]
Thus the leading dependence on $k$ in Beker's bound is reduced from $k^{68}$ to $k^{36}$, while the full statement retains the lower-order mixed terms displayed in \eqref{eq:Phi}.

Finally, combining Theorem~\ref{thm:finite-group} with the Erd\H{o}s--Moser reduction used by Beker and Sanders gives the advertised application.

\begin{theorem}[Erd\H{o}s--Moser consequence]\label{thm:erdos-moser}
Let $c<1/36$.  For every sufficiently large finite non-empty set $A\subseteq\mathbb Z$, there is a subset $B\subseteq A$ of size at least
\[
(\log |A|)^{1+c}
\]
such that $b_1+b_2\notin A$ whenever $b_1,b_2\in B$ are distinct.
\end{theorem}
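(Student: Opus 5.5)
We deduce the statement from Corollary~\ref{cor:interval} via the Erd\H{o}s--Moser reduction of Sanders and Beker, used as a black box; the only new input is the improved threshold for $F(\alpha,k)$. That reduction says, roughly: if $\phi(A)<(\log|A|)^{1+c}$ for a large set $A$ with $n=|A|$, then one can locate inside $A$ (or inside an auxiliary dense structure built from $A$) a set of integers of density $\alpha$ in an interval of length $N$ that contains no non-degenerate $k$-configuration. A $k$-configuration would produce $k$ elements whose pairwise sums avoid $A$, and these could be adjoined to a sum-free subset. The parameters in the reduction are tied by relations of the shape $k\approx(\log n)^{c'}$, $\alpha\ge(\log n)^{-O(1)}$ (so $\Lcal(\alpha)=O(\log\log n)$), and $N\le n^{O(1)}$ or at least $\log N\gg\log n$. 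I will recall these precise relations from Beker's deduction of his Theorem~1.2, where the threshold $F(\alpha,k)\le\exp(Ck^{68}\Lcal(\alpha)^{16})$ yields the exponent $c<1/68$.

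First, fix $c<1/36$ and choose $k=\lfloor(\log n)^{\theta}\rfloor$ with $36\theta<1$, so that $\theta$ is slightly larger than $c$; this matches the way Beker links $c$ to the $k$-exponent. Next, insert the parameters into $\Phi$. With $L,\Lambda=O(\log\log n)$ (using $\log(2k)=O(\log\log n)$), every term of $\Phi(k,L,\Lambda)$ is at most
\[
k^{36}(\log\log n)^{16}\le(\log n)^{36\theta}(\log\log n)^{16}=o(\log n).
\]
The lower-order terms $k^{34},k^{32},k^{30}$ are dominated by the first term and cause no trouble. Hence $N\ge\exp(C\Phi)$ holds for all large $n$. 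Corollary~\ref{cor:interval} then produces a non-degenerate $k$-configuration, contradicting the output of the reduction. This yields $\phi(A)\ge(\log n)^{1+c}$ once the reduction's gain of a factor $k^{1-o(1)}$ over $\log n$ is recorded.

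The main obstacle is bookkeeping rather than mathematics. I must check that Beker's reduction uses the configuration theorem only through the threshold $F(\alpha,k)$, and that the polylogarithmic loss in $\alpha$ only ever enters as a power of $\log\log n$. If so, the exponent relation is exactly $c<1/(\text{$k$-exponent})$ and the answer $1/36$ follows. If instead the reduction feeds $\alpha$ in a way that depends on $k$, for instance $\alpha\approx1/k$, then $L\approx\log k$ is still $O(\log\log n)$, and the same estimate goes through.
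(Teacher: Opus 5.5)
Your proposal is correct and follows the paper's route: both treat the Sanders--Beker reduction as a black box and observe that the configuration theorem enters only through a threshold $\exp(k^{36+o(1)})$ at a density polynomial in $1/k$, which Sanders' iteration then converts into every exponent $c<1/36$. The paper carries out the bookkeeping you defer. It re-derives Beker's Proposition~4.1 from his Proposition~4.2 (a set $X'$ with $|X'+X'|\ll k^{181}|X'|$ and $(2\cdot X')\cap Y=\varnothing$), then Pl\"unnecke--Ruzsa and Ruzsa's Freiman $2$-embedding into an odd cyclic group with image density $\gg k^{-724}$, and applies Theorem~\ref{thm:finite-group} directly rather than the interval corollary, so that $L,\Lambda=O(\log k)$ exactly as in your second scenario.
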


The proof has two steps.  Section~\ref{sec:local} replaces the uniform graph-counting output in Beker's local proposition by branch-sensitive alternatives.  The transitive $K_k$ recursion is run at a fixed moment $P\asymp k^2L$.  A row-degree anomaly has strength $\asymp k^{-2}$; a non-centred grid anomaly has strength $\asymp k^{-1}$; a centred grid anomaly has strength $\asymp k^{-2}$ in grid norm and, after a direct Gram argument, yields a density effect of strength $\asymp k^{-4}$.  The corresponding local density increments have substantially different costs.  Section~\ref{sec:iteration} uses the elementary budget
\[
\sum_s g_s\ll L,
\]
where $g_s$ is the relative density gain at step $s$, to sum the costs branch by branch.  The rank bottleneck comes from the centred branch, while the width bottleneck comes from the slow local-uniformisation branch; these two worst cases cannot both occur at the maximal number of iterations.

\section{The local configuration-or-increment proposition}\label{sec:local}

We follow the normalisations of \cite{Beker2025}.  Throughout the finite-group argument, $G$ carries the uniform probability measure $\mu$.  For a non-empty set $S\subseteq G$, write $\mu_S=\mu(S)^{-1}\1_S$.  For functions $f,g\colon G\to\mathbb C$, their difference convolution is
\[
(f\circ g)(x)=\E_{y\in G}f(y)\overline{g(y-x)}.
\]
If $\nu$ is a probability density on $G$, then $\|h\|_{L^p(\nu)}=(\E_x|h(x)|^p\nu(x))^{1/p}$.  We write $[k]=\{1,\ldots,k\}$ and $\psi_2(x)=2x$.  For $\Gamma\subseteq\widehat G$ and $\sigma\in\{\pm1\}$, put
\[
\Gamma\psi_2^\sigma=\{\xi\circ\psi_2^\sigma:\xi\in\Gamma\}.
\]
We use Beker's Bohr-set convention
\[
\Bohr(\Gamma;\rho)=\{x\in G:|\xi(x)-1|\le\rho\ \text{for every }\xi\in\Gamma\},
\qquad
B_\lambda=\Bohr(\Gamma;\lambda\rho)
\]
when $B=\Bohr(\Gamma;\rho)$; regularity is as in \cite[Appendix~B]{Beker2025}.

If $X$ is a finite non-empty set, expectations over $X$ are uniform.  For $f\colon X\times Y\to\mathbb R$ and positive integers $r,p$, set
\[
\|f\|_{U(r,p)}
=\left|\E_{x_1,\ldots,x_r\in X}\E_{y_1,\ldots,y_p\in Y}
\prod_{a=1}^r\prod_{b=1}^p f(x_a,y_b)\right|^{1/(rp)}.
\]
The graph-counting recursion below is the one used by Beker, itself based on Filmus--Hatami--Hosseini--Kelman \cite{Beker2025,FHHK2024}.  We reproduce the quantitative part that changes.

\subsection{A fixed-moment pre-unbalancing trichotomy}

Let $m=\binom{k}{2}$.  We shall use an absolute constant $C_0$ chosen sufficiently large and put
\begin{equation}\label{eq:P-eta-theta}
P=4\left\lceil C_0m\Lcal(\alpha_*)\right\rceil,
\qquad \theta=\frac1{100k},
\qquad \eta=\frac1{16m}.
\end{equation}
The factor $4$ ensures that $P$ is divisible by $4$.

\begin{lemma}[Fixed-moment compatibility]\label{lem:fixed-moment-compatibility}
Consider a recursive node $H$ in the proof of Beker's graph-counting lemma \cite[Lemma~2.2]{Beker2025}, and suppose that every edge function at that node has mean at least $\alpha_*$.  Write
\[
p_H=4\left\lceil |E(H)|\Lcal(\alpha_*)\right\rceil,
\qquad q_H=\frac{p_H}{p_H-1}.
\]
Let $P$ be a multiple of $4$ with $P\ge p_H$, and put $q_P=P/(P-1)$.  Then the analytic part of Beker's recursive step up to, but not including, the final unbalancing argument may be carried out with the common moment $P$ in place of $p_H$, with the same absolute constants.  In particular, the degree estimate in the one-in-neighbour case and the centred/non-centred product estimate in the multiple-in-neighbour case remain valid with every grid moment equal to $P$.
\end{lemma}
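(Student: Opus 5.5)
The plan is to go through Beker's recursive step for \cite[Lemma~2.2]{Beker2025} and audit each place where the moment $p_H$ is used. I expect every such use to be of one of two kinds. The first is a \emph{lower} threshold, $p\ge C|E(H)|\Lcal(\alpha_*)$, which is needed so that a factor $\alpha_*^{-|E(H)|/p}$ is $O(1)$. The second is a parity or divisibility requirement, such as $p$ even or $p/2$ even, which is needed so that grid norms $\|\cdot\|_{U(r,p)}$ of real functions are genuine non-negative moments and Hölder/Cauchy--Schwarz can be applied in the product form. Both kinds are preserved when $p_H$ is replaced by any $P\ge p_H$ with $4\mid P$. Indeed, $\alpha_*^{-|E(H)|/P}\le\alpha_*^{-|E(H)|/p_H}\le e^{1/4}\cdot O(1)$, and the divisibility is assumed. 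So the constants that depend only on these thresholds are unchanged.

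The steps, in order, are as follows.

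\emph{Step 1.} Isolate the Hölder step at node $H$. The count $\E\prod_{e}f_e$ is bounded by splitting off the vertex $v$ being eliminated. Hölder is applied with exponents $(P,q_P)$ in place of $(p_H,q_H)$. The conjugate factor is controlled by the mean-$\ge\alpha_*$ hypothesis together with the monotonicity of $L^{q}$ norms on probability spaces: since $q_P\le q_H$, $\|g\|_{L^{q_P}}\le\|g\|_{L^{q_H}}$.

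\emph{Step 2.} The one-in-neighbour case. The degree estimate bounds $\|\deg-\E\deg\|_{L^{P}}$ using the fact that the $L^P$ norm of a function bounded by $\alpha_*^{-O(1)}$ times its mean loses only $\alpha_*^{-O(1)/P}=O(1)$. I will check that the inequality chain uses $p$ only through $p\ge p_H$.

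\emph{Step 3.} The multiple-in-neighbour case. Here the $P$-th moment of the codegree function is expanded as a grid average $\E_{x_1,\dots,x_r}\E_{y_1,\dots,y_P}\prod f(x_a,y_b)$. After centring, the product $\prod(\mu+\tilde f)$ is split into centred and non-centred terms, and repeated Cauchy--Schwarz over the grid is applied. This needs $P$ even, and $P/2$ even for the second Cauchy--Schwarz. Both hold since $4\mid P$. With these, the estimate reads with every grid moment equal to $P$.

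The main obstacle is making sure that no step secretly uses an \emph{upper} bound on $p$. The natural suspects are a term like $(1+1/p)^{p}$, binomial counts $\binom{P}{j}$ in the centred/non-centred expansion, and a smallness condition such as $P\,\eta\le 1$. I would first check the expansion. The number of mixed terms is $2^P$, but each carries a factor of $P$-th root type after normalisation, so $\binom{P}{j}^{1/P}\le 2$ and the bound is independent of $P$. I would also confirm that the only consumer of $p$ being close to $p_H$ is the final unbalancing argument, where the dependence of the increment on $p$ enters. That argument is explicitly excluded from the lemma. Whatever else remains then goes through verbatim with absolute constants.
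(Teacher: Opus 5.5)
Your proposal is correct and follows essentially the same route as the paper: the conjugate factor in the preliminary H\"older step is controlled via $q_P\le q_H$ and monotonicity of $L^q$ norms on probability spaces, and the remaining H\"older, Cauchy--Schwarz and Gowers--Cauchy--Schwarz steps need only the parity guaranteed by $4\mid P$. One small slip is in Step~2: the loss $\alpha_*^{-O(1)/P}$ is the bound for the conjugate $L^{q_P}$ norm of the complementary factor, not for its $L^{P}$ norm, which can lose a factor close to $\alpha_*^{-O(1)}$.
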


\begin{proof}
The node-dependent moment in the preliminary H\"older estimate of \cite[Lemma~2.2]{Beker2025} is used through an upper $L^{q_H}$ bound, where $q_H$ is conjugate to $p_H$.  Since $P\ge p_H$, we have $q_P\le q_H$.  All spaces involved are probability spaces, and hence monotonicity of $L^q$ norms gives
\[
\|g\|_{q_P}\le \|g\|_{q_H}
\]
for every function $g$ occurring in that estimate.  Thus the same preliminary bound is available with $q_P$ in place of $q_H$.

After this preliminary estimate, Beker's recursive step consists of H\"older, Cauchy--Schwarz, and Gowers--Cauchy--Schwarz inequalities applied at the chosen grid moment.  These inequalities are valid at the common moment $P$; the divisibility by $4$ guarantees the parity required in the repeated Cauchy--Schwarz steps.  Deleting a sink or replacing an incoming edge by a copied edge does not change the mean of any surviving or copied edge function, so the lower bound $\alpha_*$ persists throughout the recursion.  Therefore replacing the node-dependent $p_H$ by the larger common value $P$ does not weaken any estimate before the final unbalancing step.  The two pre-unbalancing estimates used below consequently hold with $P$, as claimed.
\end{proof}

\begin{proposition}[Fixed-moment trichotomy]\label{prop:fixed-moment}
Let $X_1,\ldots,X_k$ be finite non-empty sets and, for every $1\le i<j\le k$, let
$f_{ij}\colon X_i\times X_j\to\{0,1\}$ have mean $\rho_{ij}\ge\alpha_*>0$.  Suppose
\[
\left|
\E_{x_1,\ldots,x_k}\prod_{i<j} f_{ij}(x_i,x_j)
-
\prod_{i<j}\rho_{ij}
\right|
\ge \frac12\prod_{i<j}\rho_{ij}.
\]
Then for some edge $e=(u,v)$ at least one of the following holds.
\begin{enumerate}[label=\textup{(\Alph*)}]
\item\label{it:D} With $d_e(x)=\E_y f_e(x,y)$,
\[
\|d_e-\rho_e\|_P\ge \eta\rho_e.
\]
\item\label{it:G} There is an even integer $r$ with $2\le r\le 2(k-1)$ such that
\[
\|f_e\|_{U(r,P)}\ge (1+\theta)\rho_e.
\]
\item\label{it:C} If $F_e=f_e/\rho_e$, $R_e(x)=\E_yF_e(x,y)$ and $J_e=F_e-R_e$, then
\[
\|J_e\|_{U(2,P)}\ge \eta.
\]
\end{enumerate}
\end{proposition}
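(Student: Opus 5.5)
We prove the contrapositive by induction along Beker's graph-counting recursion \cite[Lemma~2.2]{Beker2025} for the transitive tournament $K_k$, with vertices ordered $1<\cdots<k$ and edges directed $i\to j$ for $i<j$. By Lemma~\ref{lem:fixed-moment-compatibility}, every node of the recursion may be run with the common moment $P$. This is legitimate because $P\ge 4\lceil m\Lcal(\alpha_*)\rceil\ge p_H$ for every subgraph $H$, since $|E(H)|\le m$. We assume that (A), (B) and (C) all fail for every edge, and aim to show
\[
\Bigl|\E\prod_{i<j}f_{ij}-\prod_{i<j}\rho_{ij}\Bigr|<\tfrac12\prod_{i<j}\rho_{ij}.
\]
It is convenient to track the relative count $\E\prod F_{ij}$ with $F_{ij}=f_{ij}/\rho_{ij}$. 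The goal is then $|\E\prod F_{ij}-1|<\tfrac12$.

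\emph{Recursion.} At each node we remove a sink $v$ of the current graph $H$. Its in-neighbours $u_1<\cdots<u_t$ form a set of size $t\le k-1$.

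\emph{Case $t=1$.} Deleting $v$ replaces $F_{u_1v}$ by the row degree $R_{u_1v}=d_{u_1v}/\rho_{u_1v}$. Write $R=1+(R-1)$ and apply H\"older with exponents $(P,q_P)$. The error term is at most $\|R-1\|_P$ times the $L^{q_P}$ norm of the remaining count. By Lemma~\ref{lem:fixed-moment-compatibility} (the preliminary H\"older bound), that $q_P$-norm is $O(1)$ times the relative count of a smaller graph. Failure of (A) gives $\|R-1\|_P<\eta$, so one step costs a multiplicative factor $1+O(\eta)$.

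\emph{Case $t\ge2$.} Integrating out $x_v$ produces the function $\E_{x_v}\prod_{s}F_{u_sv}(x_{u_s},x_v)$. We decompose each factor as $F=R+J$. The fully non-centred part, in which every factor is $R_{u_sv}$, reduces to the case $t=1$ repeated. The remaining terms, each containing at least one $J$-factor, are controlled by the "centred/non-centred product estimate" of Lemma~\ref{lem:fixed-moment-compatibility}. This is H\"older followed by repeated Cauchy--Schwarz, which bounds each such term by a product of grid norms $\|F_{u_sv}\|_{U(r,P)}/\!\rho$-type quantities, with $r\le 2t\le2(k-1)$ even, together with at least one factor $\|J_{u_sv}\|_{U(2,P)}$. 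Failure of (B) bounds each grid factor by $(1+\theta)$, and failure of (C) bounds the centred factor by $\eta$. With at most $2^{t}$ terms, however, a crude count would lose too much. Instead we expand only one $J$ at a time telescopically: $\prod F=\prod R+\sum_s (\prod_{s'<s}R)\,J_s\,(\prod_{s'>s}F)$. This gives $t$ error terms, each of size $\le(1+\theta)^{O(t)}\eta$, times $O(1)$ relative counts of smaller graphs. Here the $F$-factors are controlled using the non-failure of (B), and the $R$-factors using the non-failure of (A) via $\|R\|_P\le1+\eta$.

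\emph{Accumulation.} Over the $k$ sink removals, with at most $m$ edge-steps in total, the multiplicative errors combine to give
\[
\Bigl|\E\prod F-1\Bigr|\le \exp\bigl(O(m\eta+k\cdot(1+\theta)^{2k}\,k\eta)\bigr)-1.
\]
With $\eta=1/(16m)$ and $\theta=1/(100k)$, we have $(1+\theta)^{2k}\le e^{1/50}$, and the total stays below $\tfrac12$ provided the absolute constants in the Lemma are matched. That matching is exactly what $C_0$ and the numerical choices in \eqref{eq:P-eta-theta} are for.

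The main obstacle is making the errors additive rather than multiplicative across the recursion. The count of each smaller graph must itself be known to be $1+o(1)$ in relative terms, or at least $O(1)$, in the $L^{q_P}$ sense required by H\"older. Verifying this needs an inner induction hypothesis that bounds not just the count but its $q_P$-moment, uniformly over the graph. This is where the size of $P\asymp m\Lcal(\alpha_*)$ is used: $\alpha_*^{-m/P}=O(1)$ absorbs the trivial $L^\infty$ bound $F\le\rho^{-1}$. Getting the bookkeeping of these constants to fit within $\tfrac12$ with $\eta=1/(16m)$ is the delicate step. I would first try to take the inductive statement directly from Beker's proof, with $P$ substituted as the Lemma allows.
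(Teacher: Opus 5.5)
Your argument is correct and uses the same two estimates as the paper: the H\"older step with exponents $(P,q_P)$, and the Cauchy--Schwarz/H\"older bound $\|\E_{x_v}J\prod_s F_s\|_P\le\|J\|_{U(2,P)}\prod_s\|F_s\|_{U(2(\ell-1),P)}$. The difference is that you run them as a contrapositive.

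\textbf{The paper's route.} It goes forward along Beker's recursion on split graphs, with the potential $\kappa(H)=2|E(H)|-d_1(H)$ and $\eps'=\frac{\kappa-1}{\kappa}\eps_H$. The invariant $\eps_H/\kappa(H)\ge 1/(4m)$ guarantees that wherever the recursion stops, Beker's local estimate gives strength at least $\eps_H/(3\kappa(H))\ge 1/(12m)>\eta$. The threshold $1+\theta$ is applied to the whole product of grid norms via $(1+\theta)^{k-1}<e^{1/100}$. This localises the anomaly at a single node but simply imports Beker's constant $3$.

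\textbf{Your route.} You assume (A)--(C) fail on every edge, delete the sinks $k,k-1,\ldots,2$ of the transitive tournament directly, and telescope $F=R+J$ at each sink. This needs no split graphs and no $\kappa$-potential, and every constant is explicit. The price is a global error accumulation, which you left unchecked.

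\textbf{The constant check closes.} Let $g$ be the count of $K_{j-1}$. Then
$\|g\|_{q_P}\le\|g\|_\infty^{1/P}\|g\|_1^{1-1/P}\le e^{1/(4C_0)}\max(1,t(K_{j-1}))$, so no separate induction on $q_P$-moments is needed. The last telescoping term vanishes because $\E_yJ(x,y)=0$. Hence deleting vertex $j$ costs at most $(2j-3)c\eta\max(1,t(K_{j-1}))$, where $c=e^{1/(4C_0)}(1+\eta)^k(1+\theta)^k\le e^{1/5}$ for $C_0\ge4$. Since $\sum_{j=2}^k(2j-3)=(k-1)^2$ and $(k-1)^2\eta=(k-1)/(8k)<1/8$, the relative error is at most $\exp(e^{1/5}/8)-1<1/5<1/2$.

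Two small corrections. First, $C_0$ only serves to make $\alpha_*^{-m/P}$ close to $1$; the bound closes because of the value $\eta=1/(16m)$, not by ``matching'' $C_0$. Second, where you write ``non-failure'' of (A) and (B), you mean their failure.
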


\begin{proof}
If $k=2$, the graph has a single edge and its graph count is exactly the mean of that edge, so the discrepancy hypothesis is impossible.  We may therefore assume $k\ge3$.  For an oriented graph $H$, let $d_1(H)$ denote the number of vertices of degree one in the underlying undirected graph.  We follow the induction on the parameter $\kappa(H)=2|E(H)|-d_1(H)$ in Beker's proof of his graph-counting lemma \cite[Lemma~2.2]{Beker2025}, but we retain the centred output instead of applying the final unbalancing step.  As in that proof, isolated vertices and isolated edges may first be removed: their contribution factors from the normalised graph count, so they cannot create the assumed relative discrepancy.  Thus the induction is non-trivial only when $\kappa(H)\ge2$.

We strengthen the induction by keeping the external bounds
\[
|E(H)|\le m,\qquad \max_w d^-(w)\le k-1.
\]
All descendants produced by the recursion satisfy these bounds: deleting a sink decreases the number of edges, while splitting one incoming edge to a new leaf preserves the number of edges and creates a vertex of indegree one.  In the split graph the new edge carries exactly the same function as the edge it replaces, so any output on that copied edge canonically corresponds to an edge of the original transitive $K_k$.

At a recursive node let $\eps_H$ denote the current relative counting discrepancy.  Beker takes
\[
\eps' = \frac{\kappa(H)-1}{\kappa(H)}\eps_H.
\]
If a sink has indegree $\ell$, deletion of the sink gives a graph $H'$ with
$\kappa(H')\le \kappa(H)-\ell$, while the split graph $H''$ satisfies
$\kappa(H'')\le \kappa(H)-1$.  Hence, in either recursive branch,
\[
\frac{\eps'}{\kappa(H_{\rm child})}
\ge \frac{\eps_H}{\kappa(H)}.
\]
Initially $\eps_{K_k}=1/2$ and $\kappa(K_k)=2m$, and therefore every node in the recursion satisfies
\begin{equation}\label{eq:eps-kappa-invariant}
\frac{\eps_H}{\kappa(H)}\ge \frac1{4m}.
\end{equation}

Every descendant has at most $m$ edges, so after enlarging the absolute constant $C_0$ if necessary, the value of $P$ in \eqref{eq:P-eta-theta} dominates the node-dependent moment $p_H$ at every stage.  Lemma~\ref{lem:fixed-moment-compatibility} therefore allows the entire pre-unbalancing recursion to be run with this single value of $P$.

Let $s$ be a sink with in-neighbours $u_1,\ldots,u_\ell$.  If $\ell=1$, the H\"older step in Beker's proof gives
\[
\left\|\E_yF_{u_1s}(\cdot,y)-1\right\|_P
\ge \frac{\eps_H}{3\kappa(H)}.
\]
By \eqref{eq:eps-kappa-invariant}, the right-hand side is at least $1/(12m)>\eta$, and \ref{it:D} follows.

Suppose $\ell\ge2$.  The Gowers--Cauchy--Schwarz calculation in the same proof gives
\begin{equation}\label{eq:pre-unbalancing-product}
\|J\|_{U(2,P)}
\prod_{i=2}^{\ell}\|F_{u_is}\|_{U(2(\ell-1),P)}
\ge \frac{\eps_H}{3\kappa(H)},
\end{equation}
where $J=F_{u_1s}-\E_yF_{u_1s}(\cdot,y)$.  If one of the non-centred factors is at least $1+\theta$, then \ref{it:G} holds with $r=2(\ell-1)\le2(k-1)$.  Otherwise
\[
\prod_{i=2}^{\ell}\|F_{u_is}\|_{U(2(\ell-1),P)}
\le (1+\theta)^{k-1}<e^{1/100}.
\]
Combining this with \eqref{eq:pre-unbalancing-product} and \eqref{eq:eps-kappa-invariant} yields
\[
\|J\|_{U(2,P)}
\ge \frac1{12e^{1/100}m}>\eta,
\]
which is \ref{it:C}.  This completes the induction.
\end{proof}

The improvement over the usual uniform graph-counting output is that the centred branch in Proposition~\ref{prop:fixed-moment} is not converted into a non-centred grid norm at moment $\asymp P/\eta^2$.  We instead use the Gram structure directly.

\subsection{The centred Gram branch}

\begin{lemma}[Positive Gram tail]\label{lem:gram-tail}
Let $f\colon X\times Y\to\{0,1\}$ have mean $\rho>0$.  Set
\[
F=f/\rho,\qquad R(x)=\E_yF(x,y),\qquad J=F-R
\]
and
\[
K(x,x')=\E_yJ(x,y)J(x',y).
\]
Assume that $P$ is even, $\|J\|_{U(2,P)}\ge\eta$, and
\begin{equation}\label{eq:gram-size-condition}
2^P\ge \frac{4\rho^{-2}}{\eta^2}.
\end{equation}
Then
\[
\|(K)_+\|_P\ge \eta^2/2.
\]
Consequently, with $\tau=\eta^2/4$ and
$E_+=\{(x,x'):K(x,x')\ge\tau\}$,
\begin{equation}\label{eq:Eplus-mass}
\Pp(E_+)\ge \left(\frac{\eta^2\rho^2}{4}\right)^P.
\end{equation}
\end{lemma}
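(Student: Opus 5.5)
The plan is to rewrite the grid norm of $J$ as a moment of the Gram kernel $K$, and then use positivity of Gram matrices to show that the positive part of $K$ carries a definite share of that moment.

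\textbf{Step 1: grid norm as a moment of $K$.} Expanding the definition of $\|\cdot\|_{U(2,P)}$ and grouping the $y_b$ gives
\[
\|J\|_{U(2,P)}^{2P}=\Bigl|\E_{x,x'}\Bigl(\E_y J(x,y)J(x',y)\Bigr)^P\Bigr|=\E_{x,x'}K(x,x')^P,
\]
where the absolute value can be dropped because $P$ is even. Writing $K=K_+-K_-$ with disjoint supports, the hypothesis becomes
\[
\E K_+^P+\E K_-^P\ge\eta^{2P}.
\]

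\textbf{Step 2: pointwise bounds.} Since $F$ and $R$ take values in $[0,\rho^{-1}]$, we have $|J|\le\rho^{-1}$, and hence $|K|\le\rho^{-2}$.

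\textbf{Step 3: Gram positivity.} $K$ is a Gram kernel, so the matrix $(K(x,x'))$ is positive semidefinite. By the Schur product theorem, each entrywise power $K^{\circ j}$ is positive semidefinite as well. Testing against the constant vector gives $\E_{x,x'}K(x,x')^j\ge0$ for every $j\ge1$. We use this with the odd exponent $j=P+1$, which gives
\[
\E K_+^{P+1}\ge \E K_-^{P+1}.
\]

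\textbf{Step 4: contradiction argument for $\|(K)_+\|_P\ge\eta^2/2$.} Suppose instead that $\E K_+^P<(\eta^2/2)^P$.
\begin{itemize}
\item From Step 1, $\E K_-^P\ge\eta^{2P}(1-2^{-P})\ge\eta^{2P}/2$.
\item By monotonicity of $L^p$ norms on a probability space, $\E K_-^{P+1}\ge(\E K_-^P)^{(P+1)/P}$, which is at least a constant multiple (about $1/2$) of $\eta^{2P+2}$.
\item By Step 2, $\E K_+^{P+1}\le\rho^{-2}\E K_+^P<\rho^{-2}\eta^{2P}2^{-P}$.
\end{itemize}
By Step 3 the last quantity must dominate the previous one. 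This forces $2^P$ to be smaller than a constant times $\rho^{-2}\eta^{-2}$, contradicting \eqref{eq:gram-size-condition}. The constant $4$ in \eqref{eq:gram-size-condition} should absorb the losses from $(1-2^{-P})^{(P+1)/P}$.

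This exponent bookkeeping is the step I expect to be the main obstacle. The crude constant $1/2$ may need sharpening using $P\ge 2$, or the comparison may need to be run with $\E K_-^P\ge(1-2^{-P})\eta^{2P}$ kept exactly.

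\textbf{Step 5: the tail bound \eqref{eq:Eplus-mass}.} Split $\E K_+^P$ according to whether $K<\tau$ or $K\ge\tau$. The first region contributes at most $\tau^P=(\eta^2/4)^P$. The second contributes at most $\rho^{-2P}\Pp(E_+)$, by Step 2. Hence
\[
\rho^{-2P}\Pp(E_+)\ge(\eta^2/2)^P-(\eta^2/4)^P\ge(\eta^2/4)^P,
\]
where the last inequality holds because $2^{-P}-4^{-P}\ge4^{-P}$ for $P\ge1$. Rearranging gives $\Pp(E_+)\ge(\eta^2\rho^2/4)^P$, which is \eqref{eq:Eplus-mass}.
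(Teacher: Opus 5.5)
Your proof is correct and follows the paper's argument. The ingredients are the same: $\|J\|_{U(2,P)}^{2P}=\E K^P$, nonnegativity of the odd moment $\E K^{P+1}$ from the Gram structure (the paper writes this as $\E_{y_1,\ldots,y_{P+1}}(\E_x\prod_a J(x,y_a))^2$ instead of citing Schur), the bound $K_+\le\rho^{-2}$, and $L^p$-monotonicity for $K_-$; the paper just uses a direct case split rather than contradiction. The bookkeeping you were worried about closes easily: using $1-2^{-P}\ge 1/2$, your chain gives $2^P<\rho^{-2}\eta^{-2}(1-2^{-P})^{-(P+1)/P}\le 2^{3/2}\rho^{-2}\eta^{-2}$ for $P\ge 2$, which contradicts \eqref{eq:gram-size-condition} without any sharpening.
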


\begin{proof}
For every positive integer $q$,
\begin{equation}\label{eq:gram-moments}
\E_{x,x'}K(x,x')^q
=
\E_{y_1,\ldots,y_q}
\left(\E_x\prod_{a=1}^qJ(x,y_a)\right)^2
\ge0.
\end{equation}
Moreover,
\[
\E K^P=\|J\|_{U(2,P)}^{2P}\ge\eta^{2P}.
\]
Since $|J|\le\rho^{-1}$, we have $|K|\le M:=\rho^{-2}$.  Write
$A_+=\E(K_+)^P$ and $A_-=\E(K_-)^P$.  If $A_+\ge\eta^{2P}/2$, the first conclusion is immediate.  Otherwise $A_-\ge\eta^{2P}/2$.  Since $P+1$ is odd, \eqref{eq:gram-moments} gives
\[
\E K_+^{P+1}\ge \E K_-^{P+1}.
\]
By monotonicity of $L^q$ norms and the bound $K_+\le M$,
\[
M A_+\ge \E K_+^{P+1}\ge A_-^{1+1/P}.
\]
Condition \eqref{eq:gram-size-condition} now implies $A_+^{1/P}\ge\eta^2/2$.

For the second statement,
\[
\E K_+^P
\le \tau^P+M^P\Pp(E_+),
\]
whereas the first part gives $\E K_+^P\ge(\eta^2/2)^P$.  Since $P\ge1$,
$(\eta^2/2)^P-(\eta^2/4)^P\ge(\eta^2/4)^P$, and \eqref{eq:Eplus-mass} follows.
\end{proof}

In the application, $\rho\ge\alpha_*$ and $P\asymp m\Lcal(\alpha_*)$.  Taking $C_0$ sufficiently large in \eqref{eq:P-eta-theta} guarantees \eqref{eq:gram-size-condition} uniformly.

\begin{lemma}[Projection of a positive Gram tail]\label{lem:gram-projection}
Under the hypotheses of Lemma~\ref{lem:gram-tail}, let $e$ denote the right-hand side of \eqref{eq:Eplus-mass}.  Then at least one of the following holds.
\begin{enumerate}[label=\textup{(\roman*)}]
\item There is a set $H\subseteq X$ of measure at least $e/2$ such that
$\E_y f(x,y)>2\rho$ for every $x\in H$.
\item There are sets $C\subseteq X$ and $T\subseteq Y$ with
\[
\Pp_X(C)\ge e/2,\qquad \Pp_Y(T)\ge \tau\rho^2,
\]
and
\[
\E_{(x,y)\in C\times T}f(x,y)\ge(1+\tau/4)\rho.
\]
\item There is a set $S\subseteq X$ with $\Pp_X(S)\ge e\tau/16$ such that
\[
\E_y f(x,y)\le(1-\tau/8)\rho
\quad\text{for all }x\in S.
\]
\end{enumerate}
\end{lemma}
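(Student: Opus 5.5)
The starting point is to rewrite the Gram kernel in terms of $F$. Since $\E_yJ(x,y)=0$ for every $x$, expanding $K$ gives
\[
K(x,x')=\E_yF(x,y)F(x',y)-R(x)R(x').
\]
So $(x,x')\in E_+$ means that the rows $x$ and $x'$ of $f$ overlap by at least $\tau$ more than their degrees predict. The plan is to fix one good row $x'$, take $T$ to be its support, and take $C$ to be the set of rows $x$ with $(x,x')\in E_+$. Then $C\times T$ should be a denser rectangle, unless the degrees $R(x)$ on $C$ are atypically high, which gives (i), or atypically low, which gives (iii).

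\emph{Step 1: choose $x'$.} Let $H=\{x:R(x)>2\}$. If $\Pp_X(H)\ge e/2$ we are in (i), so assume $\Pp_X(H)<e/2$. We need more mass for $E_+$ than \eqref{eq:Eplus-mass} records. Running the proof of Lemma~\ref{lem:gram-tail} again and keeping the slack $(\eta^2/2)^P-(\eta^2/4)^P=(2^P-1)(\eta^2/4)^P$ should give $\Pp(E_+)\ge 3e$ for $P\ge2$. Pairs with $x\in H$, and pairs with $x'\in H$, each have mass less than $e/2$. Hence $E_+\cap(H^c\times H^c)$ has mass more than $2e$. By averaging we can fix $x'\notin H$ whose fibre
\[
C=\{x\notin H:(x,x')\in E_+\}
\]
has measure at least $2e\ge e/2$. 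Put $T=\{y:f(x',y)=1\}$, so $\Pp_Y(T)=\rho R(x')$. For any $x\in C$ we have $F(x,\cdot)\le\rho^{-1}$, so $\tau\le\E_yF(x,y)F(x',y)\le\rho^{-1}R(x')$. This gives $R(x')\ge\tau\rho$, and hence $\Pp_Y(T)\ge\tau\rho^2$.

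\emph{Step 2: density on $C\times T$.} For $x\in C$, dividing the defining inequality of $E_+$ by $R(x')$ gives
\[
\E_{y\in T}f(x,y)=\frac{\rho\,\E_yF(x,y)F(x',y)}{R(x')}\ge \rho\Bigl(R(x)+\frac{\tau}{R(x')}\Bigr)\ge\rho\bigl(R(x)+\tau/2\bigr),
\]
where the last step uses $R(x')\le2$ because $x'\notin H$. Averaging over $x\in C$, the density of $f$ on $C\times T$ is at least $\rho(\E_{x\in C}R(x)+\tau/2)$. If $\E_{x\in C}R(x)\ge1-\tau/4$, this is at least $(1+\tau/4)\rho$, which is (ii).

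\emph{Step 3: the low-degree case.} Otherwise $\E_{x\in C}R<1-\tau/4$. Since $R\ge0$, a Markov-type estimate shows that a proportion at least about $\tau/8$ of $C$ has $R(x)\le1-\tau/8$. Call this set $S$. Then $\Pp_X(S)\ge 2e\cdot\tau/8\ge e\tau/16$, and on $S$ we have $\E_yf(x,y)=\rho R(x)\le(1-\tau/8)\rho$, which is (iii).

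The main obstacle is the bookkeeping in Step 1. We must keep $x'$ outside $H$, since otherwise the gain $\tau/R(x')$ could be as small as $\tau\rho$. We must also keep enough $E_+$-mass after removing the pairs that meet $H$, using the slack in Lemma~\ref{lem:gram-tail}. If that slack turns out to be insufficient, the fallback is the symmetry $K(x,x')=K(x',x)$ together with slightly weaker constants in (i).
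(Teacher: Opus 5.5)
Your proof is correct and follows essentially the same route as the paper. You fix a row $x'\notin H$ with a heavy $E_+$-fibre $C$, take $T$ to be its support, split on whether $\E_{x\in C}R(x)\ge 1-\tau/4$, and use a Markov step for (iii); the only difference is that the slack $\Pp(E_+)\ge(2^P-1)e\ge 3e$ (which you compute correctly) is unnecessary, because only the fixed row must avoid $H$, so $\Pp(E_+)\ge e$ and $\Pp_X(H)<e/2$ already give some $x'\notin H$ whose fibre has measure more than $e/2$, and excluding $H$ from $C$ plays no role in your later steps.
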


\begin{proof}
Write
\[
d(x)=\E_yf(x,y),\qquad d_2(x,x')=\E_yf(x,y)f(x',y).
\]
Since $J(x,y)=(f(x,y)-d(x))/\rho$,
\begin{equation}\label{eq:K-codegree}
K(x,x')=\frac{d_2(x,x')-d(x)d(x')}{\rho^2}.
\end{equation}
Let $H=\{x:d(x)>2\rho\}$.  If
$\Pp(E_+\cap(H\times X))\ge e/2$, then $\Pp_X(H)\ge e/2$ and the first alternative holds.

Otherwise, by averaging there is $x_0\notin H$ for which
\[
C=\{x:K(x_0,x)\ge\tau\}
\]
has measure at least $e/2$.  Put $d_0=d(x_0)$ and $T=\{y:f(x_0,y)=1\}$.  For $x\in C$, \eqref{eq:K-codegree} yields
\[
d_2(x_0,x)\ge d_0d(x)+\tau\rho^2.
\]
Since $d_2(x_0,x)\le d_0$, we have $d_0\ge\tau\rho^2$, while $x_0\notin H$ gives $d_0\le2\rho$.  Hence
\[
\E_{C\times T}f
=\frac{\E_{x\in C}d_2(x_0,x)}{d_0}
\ge \E_Cd(x)+\frac{\tau\rho^2}{d_0}
\ge \E_Cd(x)+\frac\tau2\rho.
\]
If $\E_Cd\ge(1-\tau/4)\rho$, alternative (ii) follows.  Otherwise set
\[
S=\{x\in C:d(x)\le(1-\tau/8)\rho\}.
\]
If $\Pp_C(S)<\tau/8$, then
\[
\E_Cd>(1-\tau/8)^2\rho>(1-\tau/4)\rho,
\]
a contradiction.  Thus $\Pp_X(S)\ge(e/2)(\tau/8)=e\tau/16$, proving (iii).
\end{proof}

The ordinary row-degree branch also admits a direct level extraction.  We record it in the form needed below.

\begin{lemma}[Degree-level extraction]\label{lem:degree-level}
Let $f\colon X\times Y\to\{0,1\}$ have mean $\rho$, put $d(x)=\E_yf(x,y)$, and suppose
$\|d-\rho\|_P\ge\eta\rho$.  Then at least one of the sets
\[
S_+=\{x:d(x)\ge(1+\eta/4)\rho\},\qquad
S_-=\{x:d(x)\le(1-\eta/4)\rho\}
\]
satisfies
\[
\Pp_X(S_+)\ge \frac14(\eta\rho)^P
\quad\text{or}\quad
\Pp_X(S_-)\ge \frac14\eta^P,
\]
respectively.
\end{lemma}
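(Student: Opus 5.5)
The plan is a direct level-set (Chebyshev-type) argument. I split the $P$-th moment of $d-\rho$ into its positive and negative parts and bound each part by splitting according to whether the deviation exceeds a quarter of the threshold $\eta\rho$. The two target bounds differ by a factor $\rho^P$ because the two parts have different sup bounds: $0\le d\le 1$ gives $(d-\rho)_+\le 1$ but $(\rho-d)_+\le\rho$.

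\emph{Step 1 (splitting the moment).} Write $(d-\rho)_\pm$ for the positive and negative parts. Since $|d-\rho|^P=(d-\rho)_+^P+(d-\rho)_-^P$ pointwise, the hypothesis gives
\[
\E_x(d-\rho)_+^P+\E_x(d-\rho)_-^P\ge(\eta\rho)^P .
\]
Hence one of the two terms is at least $(\eta\rho)^P/2$.

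\emph{Step 2 (positive part).} Suppose $\E(d-\rho)_+^P\ge(\eta\rho)^P/2$.
\begin{itemize}
\item Off $S_+$ we have $(d-\rho)_+<\eta\rho/4$.
\item On $S_+$ we have $(d-\rho)_+\le 1$, because $d\le1$ (as $f$ is $\{0,1\}$-valued).
\end{itemize}
Therefore
\[
\E(d-\rho)_+^P\le(\eta\rho/4)^P+\Pp_X(S_+).
\]
This gives
\[
\Pp_X(S_+)\ge(\eta\rho)^P\bigl(\tfrac12-4^{-P}\bigr)\ge\tfrac14(\eta\rho)^P,
\]
using $P\ge1$.

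\emph{Step 3 (negative part).} Suppose instead $\E(d-\rho)_-^P\ge(\eta\rho)^P/2$.
\begin{itemize}
\item Off $S_-$ we have $(\rho-d)_+<\eta\rho/4$.
\item On $S_-$ we have $(\rho-d)_+\le\rho$, because $d\ge0$.
\end{itemize}
Thus
\[
\E(d-\rho)_-^P\le(\eta\rho/4)^P+\rho^P\,\Pp_X(S_-).
\]
Dividing by $\rho^P$ gives
\[
\Pp_X(S_-)\ge\eta^P\bigl(\tfrac12-4^{-P}\bigr)\ge\tfrac14\eta^P .
\]

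I expect no genuine obstacle. The only point requiring care is using the sharper sup bound $\rho$, rather than $1$, on the negative side. This is what lets the factor $\rho^P$ cancel and produces the stronger bound $\frac14\eta^P$ for $S_-$, as stated.
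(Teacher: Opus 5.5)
Your proof is correct and is essentially the paper's argument: the paper works with $Z=d/\rho-1$ and uses $Z_+\le\rho^{-1}$ and $Z_-\le 1$. These are exactly your sup bounds $(d-\rho)_+\le 1$ and $(\rho-d)_+\le\rho$ rescaled by $\rho$, followed by the same split at level $\eta/4$. The paper appeals to $P\ge 2$ where you note that $P\ge 1$ suffices; either is fine.
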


\begin{proof}
Set $Z=d/\rho-1$.  Since $\|Z\|_P\ge\eta$, either $\E(Z_+)^P\ge\eta^P/2$ or $\E(Z_-)^P\ge\eta^P/2$.  In the first case, using $Z_+\le\rho^{-1}$ and splitting at level $\eta/4$ gives
\[
\frac{\eta^P}{2}
\le \E(Z_+)^P
\le \left(\frac{\eta}{4}\right)^P
   +\rho^{-P}\Pp_X(S_+).
\]
As $P\ge2$, the difference between the first two terms is at least $\eta^P/4$, and hence $\Pp_X(S_+)\ge(\eta\rho)^P/4$.  In the second case, $Z_-\le1$ and the same argument gives
\[
\frac{\eta^P}{2}
\le \left(\frac{\eta}{4}\right)^P+\Pp_X(S_-),
\]
so $\Pp_X(S_-)\ge\eta^P/4$.
\end{proof}

\subsection{Insertion into the local Bohr-set argument}

We now place the preceding graph outputs into Beker's local setup.  We use without reproving the standard facts about regular Bohr sets, the local correlation-to-increment estimate \cite[Corollary~3.5]{Beker2025}, and the asymmetric rectangle-to-increment estimate \cite[Proposition~3.4]{Beker2025}.  The latter is essential because it returns the density increment on a designated first endpoint.  We also use the bipartite sparse rectangle extraction of Filmus--Hatami--Hosseini--Kelman \cite[Corollary~2.3]{FHHK2024}.

The local preparation is exactly the one in \cite[proof of Proposition~3.2]{Beker2025}.  Let $B=\Bohr(\Gamma;\rho_B)$ be a regular Bohr set of rank $d$, and let $A\subseteq B$ have relative density $\alpha$.  Write $m=\binom{k}{2}$ and, as in Beker's application of his graph-counting theorem with discrepancy parameter $1/2$, take
\[
\delta=\frac{1}{64000m^2},\qquad
\gamma=2^{-10}\delta,\qquad
\lambda=\frac{c\alpha\gamma}{2dk}\asymp\frac{\alpha}{d}k^{-5},
\]
where $c>0$ is a sufficiently small absolute constant.  As in \cite[proof of Proposition~3.2]{Beker2025}, first put
\[
\widetilde B=B\cap(2\cdot B),
\qquad
\widetilde\Gamma=\Gamma\cup\{\xi\psi_2^{-1}:\xi\in\Gamma\}.
\]
Thus $\widetilde B=\Bohr(\widetilde\Gamma;\rho_B)$ has effective rank $\widetilde d:=|\widetilde\Gamma|\le2d$.  Using regular-dilate selection \cite[Lemma~B.10]{Beker2025}, one chooses parameters $\lambda_1,\ldots,\lambda_{k+1}$ with $\lambda_{k+1}=1$ and
\[
\frac{\lambda}{2}\lambda_{j+1}\le\lambda_j\le\lambda\lambda_{j+1}
\quad(j\in[k]),
\]
so that $B^{(j)}:=\widetilde B_{\lambda_j}$ is regular for every $j\in[k]$.  In particular, all the $B^{(j)}$ have the same frequency set $\widetilde\Gamma$, and for $i<j$ one has
\[
B^{(i)}=(B^{(j)})_{\lambda_i/\lambda_j},
\qquad
(\lambda/2)^{j-i}\le\lambda_i/\lambda_j\le\lambda.
\]
The local-uniformisation alternative is obtained from \cite[Lemma~3.4]{FHHK2024} exactly as in \cite[proof of Proposition~3.2]{Beker2025}; it either gives immediately a density increment of relative size $\asymp k^{-5}$ or else provides a translate $A'$ for which the relevant local densities all lie in $[(1-\gamma)\alpha,(1+\gamma)\alpha]$.  We call the former the $U$-branch and, in the latter case, set
\[
\alpha_0=(1-\gamma)\alpha.
\]
Thus every dense endpoint used below has relative density at least $\alpha_0\ge\alpha/2$ in the relevant $B^{(j)}$.

In the latter case define, as in \cite{Beker2025},
\[
A_j=A'\cap B^{(j)},\qquad
\widetilde A_j=A'\cap \tfrac12 B^{(j)},
\]
and for $i<j$ the bipartite graph
\[
f_{ij}(x,y)=\1_{2\widetilde A_j}(x+y),\qquad (x,y)\in A_i\times A_j.
\]
Let $\alpha_{ij}=\E f_{ij}$ and $\alpha_*=\min_{i<j}\alpha_{ij}$.  If some $|\alpha_{ij}-\alpha|\ge\delta\alpha/4$, Beker's pre-graph argument \cite[proof of Proposition~3.2]{Beker2025} produces a density increment of relative size $\asymp k^{-4}$, additional rank $O(k^{16}(\log(2k))^4L^6)$, and logarithmic width loss $O(k\Lcal(\alpha/d)+k\log k)$.  Since $(\log(2k))^4\ll k^4$ and $L^3\ll\Lambda^5$, this additional rank is $O(k^{20}L^3\Lambda^5)$; hence this case is absorbed by the $C$-budget.  Otherwise
\[
|\alpha_{ij}-\alpha|\le\frac{\delta}{4}\alpha
\quad(i<j),
\]
and hence $\alpha_*\ge\alpha/2$, so Proposition~\ref{prop:fixed-moment} applies.

The following elementary observation will be used to pass from graph densities to the normalised correlations required by Beker's local density-increment results.

\begin{lemma}[Local normalisation]\label{lem:local-normalisation}
Put $\beta_j=\mu_{B^{(j)}}(2\widetilde A_j)$, so that $|\beta_j-\alpha|\le\gamma\alpha$.  Let $S\subseteq A_i$ and let $\varnothing\ne T\subseteq A_j$.  Suppose $0<u\le1$ and $u\ge\delta+4\gamma$.
\begin{enumerate}[label=\textup{(\alph*)}]
\item If
\[
\E_{(x,y)\in S\times T}f_{ij}(x,y)\ge(1+u)\alpha_{ij},
\]
then
\[
\langle\mu_{2\widetilde A_j}\circ\mu_T,\mu_S\rangle
\ge(1+u/4)\mu(B^{(j)})^{-1}.
\]
\item If
\[
\E_{(x,y)\in S\times T}f_{ij}(x,y)\le(1-u)\alpha_{ij},
\]
then
\[
\langle\mu_{2\widetilde A_j}\circ\mu_T,\mu_S\rangle
\le(1-u/2)\mu(B^{(j)})^{-1}.
\]
\end{enumerate}
Moreover, if a density-increment backend produces a translate of a regular Bohr set on which either $2\widetilde A_j$ or $A_j$ has relative density at least $(1+v)$ times its density in $B^{(j)}$, where $0<v\le1$ and $v\ge4\gamma$, then the same translate has relative density at least $(1+v/2)\alpha$ in the corresponding translate of $A$.
\end{lemma}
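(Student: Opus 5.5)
The plan is to make the correlation $\langle\mu_{2\widetilde A_j}\circ\mu_T,\mu_S\rangle$ exactly proportional to the graph density $\E_{S\times T}f_{ij}$, and then compare the two normalisations $\alpha_{ij}$ and $\beta_j\mu(B^{(j)})$. Both are within a factor $1\pm O(\delta+\gamma)$ of $\alpha\mu(B^{(j)})$, and the hypothesis $u\ge\delta+4\gamma$ absorbs this error.

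\emph{Step 1: an exact identity.} Unfold the difference convolution:
\[
\langle\mu_{2\widetilde A_j}\circ\mu_T,\mu_S\rangle=\E_x\mu_S(x)\E_z\mu_{2\widetilde A_j}(z)\mu_T(z-x).
\]
Substitute $z=x+t$ with $t\in T$. This gives
\[
\mu(2\widetilde A_j)^{-1}\E_{(x,t)\in S\times T}\1_{2\widetilde A_j}(x+t)=\frac{\E_{S\times T}f_{ij}}{\beta_j\,\mu(B^{(j)})}.
\]
Here we use $2\widetilde A_j\subseteq B^{(j)}$, so that $\mu(2\widetilde A_j)=\beta_j\mu(B^{(j)})$. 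We follow Beker's orientation convention for $f_{ij}$. If that convention has $x-y$ rather than $x+y$, one replaces $S$ by $-S$ throughout, which changes nothing below.

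\emph{Step 2: inequalities (a) and (b).} In the regime where the lemma is used, $|\alpha_{ij}-\alpha|\le\delta\alpha/4$ and $|\beta_j-\alpha|\le\gamma\alpha$.

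For (a), the quotient is at least $(1+u)(1-\delta/4)/(1+\gamma)$ times $\mu(B^{(j)})^{-1}$. Expanding to first order gives at least
\[
1+u-2(1+u)(\delta/4+\gamma)\ge 1+u-4(\delta/4+\gamma)\ge 1+u-u=\dots
\]
That last chain is too lossy, so more carefully use $(1+u)\le2$ and $\delta/4+\gamma\le u/4$. This yields at least $1+u-u/2-O(u^2\delta)\ge 1+u/4$, using $u\le1$ to control second-order terms.

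For (b), the bound is $(1-u)(1+\delta/4)/(1-\gamma)\le1-u+2(\delta/4+\gamma)\le 1-u/2$, by the same bookkeeping. These are routine calculations, so I would only check the constants.

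\emph{Step 3: the transfer statement.} First consider $2\widetilde A_j$. Since $|G|$ is odd, multiplication by $2$ is an automorphism of $G$, and
\[
2\widetilde A_j=2A'\cap B^{(j)}.
\]
The preimage under $\psi_2$ of a translate $x+B'$, with $B'=\Bohr(\Gamma';\rho')$ regular, is the translate $x/2+\Bohr(\Gamma'\psi_2;\rho')$. This Bohr set has the same rank and width, and it is still regular because $\psi_2$ preserves all size ratios. The relative density of $2\widetilde A_j$ on $x+B'$ equals that of $A'$ (hence of a translate of $A$) on this preimage. That density is at least
\[
(1+v)\beta_j\ge(1+v)(1-\gamma)\alpha\ge(1+v-2\gamma)\alpha\ge(1+v/2)\alpha,
\]
since $v\ge4\gamma$. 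For $A_j$, the density of $A'$ in $B^{(j)}$ is at least $(1-\gamma)\alpha$ by the local-uniformisation output, and the same arithmetic applies. Finally, $A'$ is a translate of $A$, so the increment transfers to $A$.

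\emph{Main obstacle.} The only step I expect to need care is Step 3. One must verify that the dilated Bohr set $\psi_2^{-1}(x+B')$ is a legitimate regular Bohr set of the same rank, in Beker's convention with frequency set $\Gamma\psi_2^{\sigma}$. One must also verify that ``relative density'' of $2\widetilde A_j$ is measured against the correct base density $\beta_j$ rather than $\alpha$. The arithmetic in Step 2 is elementary once the identity in Step 1 has the right sign convention.
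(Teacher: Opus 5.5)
Your proposal is correct and follows the paper's proof. Both use the exact identity $\langle\mu_{2\widetilde A_j}\circ\mu_T,\mu_S\rangle=\E_{S\times T}f_{ij}/(\beta_j\mu(B^{(j)}))$, then elementary bounds from $|\alpha_{ij}-\alpha|\le\delta\alpha/4$ and $|\beta_j-\alpha|\le\gamma\alpha$, and finally $(1+v)(1-\gamma)\ge1+v/2$ for the transfer.

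The only blemishes are cosmetic. In Step 2(a) there is no second-order error to control: with $a=\delta/4$ and $b=\gamma$, the inequality $(1-a)/(1+b)\ge1-a-b$ gives $(1+u)(1-a)/(1+b)\ge1+u-2(a+b)\ge1+u/2$ directly. In Step 3 the pulled-back density is that of $\widetilde A_j\subseteq A'$, so the density of $A'$ is ``at least'' that value rather than equal to it, which is the direction you need.
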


\begin{proof}
By the definition of the normalised indicators and difference convolution,
\[
\langle\mu_{2\widetilde A_j}\circ\mu_T,\mu_S\rangle
=\frac{\E_{(x,y)\in S\times T}f_{ij}(x,y)}{\beta_j\mu(B^{(j)})}.
\]
Write $a=\delta/4$ and $b=\gamma$.  In the positive case, using $\alpha_{ij}\ge(1-a)\alpha$ and $\beta_j\le(1+b)\alpha$, we obtain
\[
\frac{(1+u)\alpha_{ij}}{\beta_j}
\ge\frac{(1+u)(1-a)}{1+b}
=1+\frac{u-a-ua-b}{1+b}.
\]
Since $u\le1$ and $u\ge4(a+b)=\delta+4\gamma$, the numerator in the last fraction is at least $u/2$; as $b<1$, this is at least $u/4$ after division by $1+b$.  This proves part~(a).  In the negative case, $\alpha_{ij}\le(1+a)\alpha$ and $\beta_j\ge(1-b)\alpha$, and hence
\[
1-\frac{(1-u)\alpha_{ij}}{\beta_j}
\ge\frac{u+ua-a-b}{1-b}
\ge\frac{u-(a+b)}{1-b}
\ge\frac{u}{2},
\]
which proves part~(b).

For the final assertion, each dense endpoint has initial density $\beta\ge(1-\gamma)\alpha$.  Thus
\[
(1+v)\beta\ge(1+v)(1-\gamma)\alpha
\ge(1+v/2)\alpha
\]
whenever $v\ge4\gamma$, since $v/2-\gamma-v\gamma\ge v/2-2\gamma\ge0$.  The same statement for $A_j$ follows from its density bound in the local-uniformisation alternative.  This is the normalisation calculation used in \cite[proof of Proposition~3.2, equations (6)--(8)]{Beker2025}.
\end{proof}

\begin{proposition}[Branch-sensitive local alternative]\label{prop:local}
There are absolute constants $c,C>0$ such that the following holds.  Let $G$ be a finite abelian group of odd order, let $B=\Bohr(\Gamma;\rho_B)$ be a regular Bohr set of rank $d$, and let $A\subseteq B$ have relative density $\alpha>0$.  Put $L=\Lcal(\alpha)$ and $\Lambda=L+\log(2k)$.  Then either
\begin{enumerate}[label=\textup{(\roman*)}]
\item\label{local-count} the proportion of $k$-tuples generating a $k$-configuration in $A$ is at least
\[
\exp\!\bigl(-C(k^2\log k)d\Lcal(\alpha/d)\bigr)\rho_B^{2kd};
\]
or there is a translate of $A$ having increased density on a regular Bohr set $B'=\Bohr(\Gamma';\rho_B')$ and a sign $\sigma\in\{\pm1\}$ such that the step belongs to one of the following cost classes:
\item $U$-type:
\[
\alpha'\ge(1+ck^{-5})\alpha,
\qquad \Gamma'\subseteq\Gamma\cup\{\gamma\psi_2^{\sigma}:\gamma\in\Gamma\},
\]
with no new auxiliary frequencies and logarithmic width loss
$O(k\Lcal(\alpha/d)+k\log k)$;
\item $D$-type:
\[
\alpha'\ge(1+ck^{-2})\alpha,
\qquad |\Gamma'\setminus(\Gamma\cup\Gamma\psi_2^\sigma)|
\le Ck^{12}L^4\Lambda^4;
\]
\item $C$-type:
\[
\alpha'\ge(1+ck^{-4})\alpha,
\qquad |\Gamma'\setminus(\Gamma\cup\Gamma\psi_2^\sigma)|
\le Ck^{20}L^3\Lambda^5;
\]
\item $G$-type:
\[
\alpha'\ge(1+ck^{-1})\alpha,
\qquad |\Gamma'\setminus(\Gamma\cup\Gamma\psi_2^\sigma)|
\le Ck^{19}L^{11}.
\]
\end{enumerate}
For the $D$- and $C$-type branches the logarithmic width loss is
$O(k\Lcal(\alpha/d)+k\log k)$, while for the $G$-type branch it is
$O(k\Lcal(\alpha/d)+k^3L^2)$.
\end{proposition}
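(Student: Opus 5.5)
The proof follows the local preparation already laid out above, case by case. First we run the local-uniformisation step from \cite[Lemma~3.4]{FHHK2024}, as in \cite[proof of Proposition~3.2]{Beker2025}. If it produces an increment, this is the $U$-type outcome. Its frequency set only adjoins $\Gamma\psi_2^{\sigma}$ through the passage to $\widetilde B$ and its dilates, and its width loss comes from the $k$ dilations by factors $\ge(\lambda/2)$, giving $O(k\Lcal(\alpha/d)+k\log k)$. If instead some $|\alpha_{ij}-\alpha|\ge\delta\alpha/4$, the pre-graph argument quoted before Lemma~\ref{lem:local-normalisation} gives a $C$-type step, as already noted. In the remaining case every $\alpha_{ij}$ is within $\delta\alpha/4$ of $\alpha$.

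Next we compare the configuration count with the graph count. By the definitions of $A_j$, $\widetilde A_j$ and regularity of the nested $B^{(j)}$, the proportion of $k$-tuples generating configurations in $A$ is at least $\prod_j\mu(B^{(j)})$ times a constant times $\E\prod_{i<j}f_{ij}$. This is Beker's counting identity, with losses absorbed into $\exp(-C k^2\log k\, d\Lcal(\alpha/d))\rho_B^{2kd}$ via the standard Bohr-set size bound $\mu(B_\lambda)\ge(\lambda/2)^{\widetilde d}\mu(B)$. So if the graph count is at least $\tfrac12\prod\rho_{ij}\ge\tfrac12(\alpha/2)^{m}$, alternative (i) holds. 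Otherwise Proposition~\ref{prop:fixed-moment} applies with $\alpha_*\ge\alpha/2$ and $P\asymp k^2L$, and it yields branch \ref{it:D}, \ref{it:G} or \ref{it:C} on some edge $(i,j)$.

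\textbf{Branch \ref{it:D}.} We apply Lemma~\ref{lem:degree-level} with $\eta\asymp k^{-2}$. This gives a set $S\subseteq A_i$ of relative density at least $(\eta\alpha/2)^P/4$ on which the row degrees into $A_j$ deviate by a factor $1\pm\eta/4$. Taking $T=A_j$, part (a) or (b) of Lemma~\ref{lem:local-normalisation} (valid since $\eta/4\ge\delta+4\gamma$) turns this into a correlation deviation of size $\asymp k^{-2}$ for $\langle\mu_{2\widetilde A_j}\circ\mu_{A_j},\mu_S\rangle$, with $\Lcal(\mu(S))\ll PL\asymp k^2L^2$. Feeding this into \cite[Corollary~3.5]{Beker2025} gives an increment $1+ck^{-2}$. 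The Kelley--Meka rank cost is roughly $\epsilon^{-4}\Lcal(\alpha)\Lcal(\mu(S))^{2}$-type up to $\Lambda$ factors, and tracking the exponents should give $k^{12}L^4\Lambda^4$. The negative-deviation case is handled by the usual complement/sign trick, with $\sigma$ recording which endpoint ($A_j$ or $2\widetilde A_j$) receives the increment.

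\textbf{Branch \ref{it:C}.} We apply Lemma~\ref{lem:gram-tail} and then Lemma~\ref{lem:gram-projection} with $\tau\asymp k^{-4}$ and $e=(\eta^2\alpha^2/16)^P$, so $\Lcal(e)\asymp k^2L\Lambda$. Outcome (i) is a huge degree increment that is handled as in branch D. Outcome (ii) is a rectangle $C\times T$ with density $(1+\tau/4)\alpha_{ij}$; via Lemma~\ref{lem:local-normalisation}(a) this feeds into \cite[Proposition~3.4]{Beker2025}, which places the increment on a designated endpoint. Outcome (iii) is a degree-deficit set, handled as in branch D. Each case gives an increment $1+ck^{-4}$, and the rank cost is $\tau^{-O(1)}$ times powers of $\Lcal(e)$ and $L$, totalling $k^{20}L^3\Lambda^5$.

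\textbf{Branch \ref{it:G}.} Here $\|f_e\|_{U(r,P)}\ge(1+\theta)\rho_e$ with $\theta\asymp k^{-1}$. We use \cite[Corollary~2.3]{FHHK2024} to extract a dense rectangle $S\times T$ with density $(1+\theta/2)\alpha_{ij}$, where both sides have measure about $\alpha^{O(rP)}$, so $\Lcal\ll k^3L^2$. Then \cite[Proposition~3.4]{Beker2025} gives an increment $1+ck^{-1}$ and rank $k^{19}L^{11}$. The larger width loss $k^3L^2$ comes from $\Lcal$ of the rectangle sides.

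The main obstacle will be the exponent bookkeeping in branch C. There we must check that \cite[Proposition~3.4]{Beker2025} depends on $\Lcal$ of the two rectangle sides only polynomially with the stated powers, and that the small $\Pp_Y(T)\ge\tau\rho^2$ enters only logarithmically. To do this I would first write out the exact rank formula of that proposition and substitute $\epsilon=\tau/16$, $\Lcal(\mu(C))\ll k^2L\Lambda$ and $\Lcal(\mu(T))\ll\Lambda$.
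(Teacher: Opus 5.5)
Your routing of the branches matches the paper's. The $D$-branch and the degree outcomes (i) and (iii) of Lemma~\ref{lem:gram-projection} go through Beker's Corollary~3.5 with $T=A_j$. The two rectangle outcomes go through Beker's Proposition~3.4 with the dense set $2\widetilde A_j$ as designated first endpoint.

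The gap is in the quantitative part, which is the substance of the proposition. It begins with a missing step. Proposition~3.4 does not take a rectangle correlation as input. Its hypothesis is $\|\mu_{A_1}\circ\mu_{A_2}\|_{L^p(\mu_{B'}*\mu_{B''+x})}\ge(1+\eps)\mu(B_0)^{-1}$ for an integer $p\ge C\eps^{-1}\Lcal(\eps)$. To reach it you must do three things:
\begin{enumerate}
\item apply H\"older to $\langle\mu_{2\widetilde A_j}\circ\mu_T,\mu_S\rangle\ge(1+c\eps)\mu(B^{(j)})^{-1}$, which gives an $L^p(\mu_{B^{(i)}})$ lower bound with loss $\gamma_S^{1/p}$, where $\gamma_S=\mu_{B^{(i)}}(S)$;
\item choose $p\asymp\eps^{-1}(\Lcal(\gamma_S)+\Lcal(\eps))$ so that this loss is harmless;
\item pick regular dilates $B'=B^{(i)}_{\lambda'}$ and $B''=(B')_{\lambda''}$ with $\lambda',\lambda''\asymp 1/\widetilde d$, and transfer the bound via Beker's Lemmas~B.11 and~A.2.
\end{enumerate}
The first carrier enters Proposition~3.4 only through this $p$, and $p^2$ is the dominant rank factor. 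With $p_G=O(k^4L^2)$ one gets $k^2\cdot p_G^2\cdot(k^3L^2)^2\cdot L\cdot k^3L^2\ll k^{19}L^{11}$. With $p_C=O(k^6L\Lambda)$ one gets $k^8\cdot p_C^2\cdot\Lambda^2\cdot L\cdot\Lambda\ll k^{20}L^3\Lambda^5$. Your closing plan, substituting $\Lcal(\mu(C))\ll k^2L\Lambda$ directly into the rank formula, cannot work as stated. $C$ is not a parameter of Proposition~3.4, and the factor $\tau^{-2}\asymp k^{8}$ hidden in $p_C^2$ is exactly what produces $k^{20}$.

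In branch $D$ your entropy bound is wrong. One has $\Lcal\bigl((\eta\alpha/2)^P/4\bigr)\asymp P(L+\log(1/\eta))\asymp k^2L\Lambda$, not $PL\asymp k^2L^2$, because $\log(1/\eta)\asymp\log k$ is not $O(L)$ when $\alpha$ is large.

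The rank formula of Corollary~3.5 is also misquoted. It is $\eps^{-4}\Lcal(\min(\eps,\gamma_S))^2\Lcal(\eps\alpha_0)^2\Lcal(\alpha_0)^2$, with $\gamma_S$ the carrier density, not $\eps^{-4}\Lcal(\alpha)\Lcal(\mu(S))^2$. With $H_D=O(k^2L\Lambda)$ it gives $k^8(k^2L\Lambda)^2\Lambda^2L^2=k^{12}L^4\Lambda^4$. With $\eps\asymp k^{-4}$ the low-degree centred case gives $k^{16}(k^2L\Lambda)^2\Lambda^2L^2\le k^{20}L^3\Lambda^5$. Your figures do not reproduce either bound.

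No complement trick is needed for negative deviations, since Corollary~3.5 is stated for $\bigl|\langle\mu_{A_1}\circ\mu_{A_2},\mu_C\rangle-\mu(B)^{-1}\bigr|$.

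Finally, you pass over two hypotheses that need checking:
\begin{enumerate}
\item the carrier-scale condition $\lambda_i/\lambda_j\le\lambda\le c\eps\alpha_0/\widetilde d$, which holds even for the smallest defect $\eps\asymp k^{-4}$ only because $\lambda\asymp(\alpha/d)k^{-5}$;
\item that every backend gain exceeds $4\gamma$, so that Lemma~\ref{lem:local-normalisation} converts it into an increment over $\alpha$ before the pullback $x\mapsto x/2$.
\end{enumerate}
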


\begin{proof}
The count alternative is unchanged from Beker's proof of \cite[Proposition~3.2]{Beker2025}: once the graph count is at least one half of the product of the edge densities, the product of the nested Bohr-set measures, together with the standard Bohr-size estimate \cite[Lemma~B.7]{Beker2025}, gives the lower bound in \ref{local-count}.  We therefore treat the non-counting branches.

The $U$-branch is the local-uniformisation failure in \cite[proof of Proposition~3.2]{Beker2025}.  Since $\gamma\asymp k^{-4}$, the relative density gain is $\gamma/(4k)\asymp k^{-5}$.  This step only closes the old frequency set under one multiplication-by-$2$ pullback and introduces no auxiliary set of frequencies.

Suppose next that Proposition~\ref{prop:fixed-moment}\ref{it:D} holds for some edge $(i,j)$.  Lemma~\ref{lem:degree-level}, together with $P=O(k^2L)$, $\eta\asymp k^{-2}$, and $\rho_e=\alpha_{ij}\asymp\alpha$, gives a positive- or negative-degree carrier $S\subseteq A_i$ of Bohr-relative logarithmic density
\begin{equation}\label{eq:HD}
H_D=O(k^2L\Lambda).
\end{equation}
On $S$ the row density differs from $\alpha_{ij}$ by a relative amount at least $\eta/4$.  Since $\eta/4\gg\delta+4\gamma$, Lemma~\ref{lem:local-normalisation}, with $T=A_j$, converts this into
\[
\left|\langle\mu_{2\widetilde A_j}\circ\mu_{A_j},\mu_S\rangle
      -\mu(B^{(j)})^{-1}\right|
\ge c\eta\,\mu(B^{(j)})^{-1}.
\]
Thus the correlation defect has size $\eps_D\asymp\eta\asymp k^{-2}$.  The set $S$ lies in the regular dilate $B^{(i)}=(B^{(j)})_{\lambda_i/\lambda_j}$.  Since $\widetilde d\le2d$, $\alpha_0\ge\alpha/2$, and $\lambda\asymp(\alpha/d)k^{-5}$, after choosing the absolute constant in $\lambda$ sufficiently small we have
\[
\lambda_i/\lambda_j\le\lambda\le c\eps_D\alpha_0/\widetilde d.
\]
Consequently all hypotheses of \cite[Corollary~3.5]{Beker2025} are satisfied with ambient set $B^{(j)}$, carrier set $S$, endpoint-density parameter $\alpha_0$, and carrier dilate $B^{(i)}$.  Substituting $\eps_D\asymp k^{-2}$ and \eqref{eq:HD} into its rank bound gives
\[
\eps_D^{-4}\Lcal(\min(\eps_D,e^{-H_D}))^2
\Lcal(\eps_D\alpha_0)^2\Lcal(\alpha_0)^2
\ll k^{12}L^4\Lambda^4.
\]
The corollary returns an increment by a relative factor $1+c\eps_D$ on one of the two dense endpoints $2\widetilde A_j$ and $A_j$.  Since $\eps_D\gg\gamma$, the final assertion of Lemma~\ref{lem:local-normalisation} converts this into a density at least $(1+c'\eps_D)\alpha$ for a suitable absolute $c'>0$.  If the increment is on $A_j$, it is already an increment for a translate of $A$.  If it is on $2\widetilde A_j$, apply the automorphism $x\mapsto x/2$.  The pullback of a regular Bohr set $\Bohr(\Gamma';\rho')$ is $\Bohr(\Gamma'\psi_2;\rho')$; hence rank, width, and regularity are unchanged.  Since $\widetilde A_j\subseteq A'$, this again gives an increment for a translate of $A$.  This is the final rescaling step used in \cite[proof of Proposition~3.2]{Beker2025}.  We obtain the $D$-type alternative.

Assume Proposition~\ref{prop:fixed-moment}\ref{it:G}.  Since $rP=O(k^3L)$ and
\[
\|f_e\|_{U(r,P)}^{rP}
\ge(1+\theta)^{rP}\rho_e^{rP},
\]
Corollary~2.3 of \cite{FHHK2024}, applied to $K_{r,P}$, produces a rectangle $S\times T\subseteq A_i\times A_j$ on which the edge density is at least $(1+\theta/2)\rho_e$ and
\[
\dens_{A_i}(S)\dens_{A_j}(T)\gg \theta\alpha_*^{rP+1}.
\]
As both densities are at most one, each is at least the product on the left.  After multiplying by the ambient densities of $A_i$ and $A_j$, both $S$ and $T$ therefore have Bohr-relative logarithmic density $O(k^3L^2)$.  Since $\theta/2\gg\delta+4\gamma$, Lemma~\ref{lem:local-normalisation} gives
\[
\langle\mu_{2\widetilde A_j}\circ\mu_T,\mu_S\rangle
\ge(1+c\theta)\mu(B^{(j)})^{-1}.
\]
Write $\gamma_S=\mu_{B^{(i)}}(S)$.  H\"older's inequality yields, for every $p>1$,
\[
\|\mu_{2\widetilde A_j}\circ\mu_T\|_{L^p(\mu_{B^{(i)}})}
\ge (1+c\theta)\gamma_S^{1/p}\mu(B^{(j)})^{-1}.
\]
Choose an integer
\[
p_G=\left\lceil C_1\theta^{-1}
\bigl(\Lcal(\gamma_S)+\Lcal(\theta)\bigr)\right\rceil
=O(k^4L^2),
\]
where $C_1$ is a sufficiently large absolute constant.  Then $\gamma_S^{1/p_G}\ge1-c\theta/8$.  The additional term $\theta^{-1}\Lcal(\theta)$ ensures, after adjusting $C_1$, that
$p_G\ge C\eps_G^{-1}\Lcal(\eps_G)$ for every $\eps_G\asymp\theta$, which is the moment hypothesis in \cite[Proposition~3.4]{Beker2025}.

Using regular-dilate selection \cite[Lemma~B.10]{Beker2025}, choose regular Bohr sets
\[
B'=B^{(i)}_{\lambda'},
\qquad
B''=(B')_{\lambda''},
\qquad
\lambda',\lambda''\in[c/(2\widetilde d),c/\widetilde d].
\]
They have rank $\widetilde d$ and satisfy $B''\subseteq(B')_{c/\widetilde d}$.  Beker's local averaging comparison \cite[Lemma~B.11]{Beker2025}, followed by the averaging lemma \cite[Lemma~A.2]{Beker2025}, transfers the preceding $L^{p_G}(\mu_{B^{(i)}})$ lower bound to an $L^{p_G}$ norm with respect to a measure of the form $\mu_{B'}*\mu_{B''+x}$, with loss at most $2^{-1/p_G}=1-O(\theta)$.  Taking $C_1$ large and reducing the absolute constant in the anomaly, we obtain
\[
\|\mu_{2\widetilde A_j}\circ\mu_T\|_{L^{p_G}(\mu_{B'}*\mu_{B''+x})}
\ge(1+\eps_G)\mu(B^{(j)})^{-1},
\qquad \eps_G\asymp\theta.
\]
Thus every hypothesis of \cite[Proposition~3.4]{Beker2025} holds with base set $B^{(j)}$, averaging Bohr sets $B',B''$, first endpoint $2\widetilde A_j$, and second endpoint $T$.

The first endpoint has density $\asymp\alpha$ in $B^{(j)}$, while the second has logarithmic density $O(k^3L^2)$.  Substitution into the rank formula of \cite[Proposition~3.4]{Beker2025} gives
\[
|\Delta_G|
\ll k^2\,(k^4L^2)^2\,(k^3L^2)^2\,L\,(k^3L^2)
\ll k^{19}L^{11}.
\]
The width formula in \cite[Proposition~3.4]{Beker2025} gives logarithmic loss $O(k\Lcal(\alpha/d)+k^3L^2)$.  Proposition~3.4 returns a relative increment $1+c\eps_G$ on its designated first endpoint.  Since $\eps_G\gg\gamma$, Lemma~\ref{lem:local-normalisation} converts this to an increment of size $1+c'\eps_G$ relative to $\alpha$, and the inverse-dilation argument from the $D$-branch turns the increment of $2\widetilde A_j$ into one for a translate of $A$ without changing rank, width, or regularity.  This is the $G$-type alternative.

It remains to treat Proposition~\ref{prop:fixed-moment}\ref{it:C}.  Lemma~\ref{lem:gram-tail} applies.  Let $H_C\ge1$ be a common upper bound for the logarithmic densities, relative to the appropriate $B^{(i)}$, of the first-coordinate carriers produced by Lemma~\ref{lem:gram-projection}.  From \eqref{eq:Eplus-mass}, together with $\eta\asymp k^{-2}$, $P=O(k^2L)$ and $\rho_e\asymp\alpha$, we may take
\[
H_C=O(k^2L\Lambda).
\]  Here
\[
\tau=\frac{\eta^2}{4}=\frac{1}{1024m^2},
\qquad
\frac{\tau}{8}=\frac{1}{8192m^2}>7\delta,
\qquad
\frac{\tau}{4}>15\delta.
\]
Since $\gamma=2^{-10}\delta$, these inequalities imply in particular that $\tau/8>\delta+4\gamma$.  Hence Lemma~\ref{lem:local-normalisation} applies to both centred effects with an absolute proportion of their original strength.

In the low-degree outcome of Lemma~\ref{lem:gram-projection}, the carrier $S$ satisfies
$d_e(x)\le(1-\tau/8)\rho_e$ for all $x\in S$.  Lemma~\ref{lem:local-normalisation}, with $T=A_j$ and $u=\tau/8$, therefore gives a correlation defect $\eps_C\asymp\tau\asymp k^{-4}$.  As above,
$B^{(i)}=(B^{(j)})_{\lambda_i/\lambda_j}$ is regular and
$\lambda_i/\lambda_j\le c\eps_C\alpha_0/\widetilde d$ after the absolute constant in $\lambda$ is chosen sufficiently small.  Thus \cite[Corollary~3.5]{Beker2025} applies with endpoint-density parameter $\alpha_0$ and gives additional rank
\[
\ll k^{16}(k^2L\Lambda)^2\Lambda^2L^2
\ll k^{20}L^4\Lambda^4
\le k^{20}L^3\Lambda^5.
\]
The high-degree outcome is handled in the same way.  Taking $S=H$, its row density is larger than $2\rho_e$, so Lemma~\ref{lem:local-normalisation} may be applied with $u=1$ and $T=A_j$, giving a positive correlation defect of constant size.  The carrier still has logarithmic density at most $H_C+O(1)$, so the rank and width costs are smaller than the displayed bound.  In both the low- and high-degree outcomes, the increment returned on either dense endpoint is converted back to a relative increment over $\alpha$ by the final assertion of Lemma~\ref{lem:local-normalisation}; if necessary, inverse dilation returns the increment from $2\widetilde A_j$ to $A'$ as in the $D$-branch.

Consider finally the positive-rectangle outcome.  We have $C\subseteq A_i$, $T\subseteq A_j$,
\[
\dens_{A_i}(C)\ge e/2,
\qquad
\dens_{A_j}(T)\ge\tau\rho_e^2,
\]
and edge density at least $(1+\tau/4)\rho_e$ on $C\times T$.  The second inequality implies that $T$ has Bohr-relative density at least $c\tau\alpha^3\gg k^{-4}\alpha^3$, and hence logarithmic density $O(\Lambda)$.  The first carrier has logarithmic density $H_C+O(1)$.  Since $\tau/4>\delta+4\gamma$, Lemma~\ref{lem:local-normalisation} gives
\[
\langle\mu_{2\widetilde A_j}\circ\mu_T,\mu_C\rangle
\ge(1+c\tau)\mu(B^{(j)})^{-1}.
\]
H\"older lifting as above gives an $L^p(\mu_{B^{(i)}})$ anomaly.  Choose the integer
\[
p_C=\left\lceil C_1\tau^{-1}
\bigl(H_C+\Lcal(\tau)\bigr)\right\rceil
=O(k^6L\Lambda).
\]
The term $\tau^{-1}H_C$ preserves an absolute proportion of the $\tau$-sized anomaly after H\"older lifting, while the term $\tau^{-1}\Lcal(\tau)$ guarantees the moment condition $p_C\ge C\eps_C^{-1}\Lcal(\eps_C)$ for $\eps_C\asymp\tau$.  Choosing regular $B',B''$ exactly as in the $G$-branch and applying \cite[Lemmas~B.11 and~A.2]{Beker2025}, we obtain the hypothesis of \cite[Proposition~3.4]{Beker2025} with $\eps_C\asymp\tau$, first endpoint $2\widetilde A_j$, and second endpoint $T$.  Using $\Lcal(\alpha_1)=O(L)$ for the dense first endpoint and $\Lcal(\alpha_2)=O(\Lambda)$ for $T$, the rank formula in \cite[Proposition~3.4]{Beker2025} gives
\[
|\Delta_{C,+}|
\ll
k^8\,(k^6L\Lambda)^2\,\Lambda^2\,L\,\Lambda
\ll k^{20}L^3\Lambda^5.
\]
As before, the designated-side conclusion first gives a relative increment $1+c\eps_C$ over the density of $2\widetilde A_j$ in $B^{(j)}$.  Since $\eps_C\gg\gamma$, Lemma~\ref{lem:local-normalisation} converts this to an increment of order $\eps_C$ over $\alpha$, and inverse dilation returns it to a translate of $A$.  This completes the $C$-type alternative.  The pre-graph edge-discrepancy case in Beker's proof has relative gain $\asymp k^{-4}$ and a smaller rank cost, so it is absorbed into the same budget class.

For completeness, the lower bound
$\lambda_i/\lambda_j\ge(\lambda/2)^{j-i}$ shows that any use of Corollary~3.5 loses at most $O(k\Lcal(\alpha/d)+k\log k)$ in logarithmic width, after including its polynomial factors in $\widetilde d$, $\eps^{-1}$ and the additional rank.  For Proposition~3.4, the parameter denoted by $\rho$ in the backend is the width of the inner averaging Bohr set $B''$, not the width of $B^{(j)}$.  Since $B''$ is obtained from $B^{(i)}$ by two regular dilations of factors $\asymp1/\widetilde d$, this distinction contributes only $O(\log(2\widetilde d))$.  The backend then gives
\[
\rho'\gg\rho(B'')\eps(\alpha_1\alpha_2)^{1/2}/(\widetilde d^{3}d').
\]
The second-endpoint entropy is $O(\Lambda)$ in the centred branch and $O(k^3L^2)$ in the $G$-branch.  Together with the nested-scale loss and the polynomial factors in the ranks, this gives precisely the width bounds stated in the proposition.  This completes the proof.
\end{proof}

\section{Iteration and proof of the main theorem}\label{sec:iteration}

The local alternatives in Proposition~\ref{prop:local} have different costs and different density gains.  The key bookkeeping point is to weight every occurrence by its actual gain.

\begin{lemma}[Branch budget]\label{lem:budget}
Let $\alpha_0,\ldots,\alpha_t\le1$ be the densities in a density-increment iteration. Suppose the branch used at step $s$ is $b_s$ and
\[
\alpha_s\ge(1+c g_{b_s})\alpha_{s-1},\qquad 0<g_{b_s}\le1.
\]
Put $L_0=\log(2/\alpha_0)$ and $g_*=\min_b g_b$.  Then
\begin{equation}\label{eq:density-budget}
\sum_{s=1}^t g_{b_s}\ll L_0,
\end{equation}
and in particular $t\ll L_0/g_*$.  If branch $b$ introduces at most $D_b$ new auxiliary frequencies and has logarithmic width cost at most $W_b$, then
\begin{equation}\label{eq:sum-Delta}
\sum_{s=1}^t|\Delta_s|
\ll L_0\max_b\frac{D_b}{g_b},
\end{equation}
while
\begin{equation}\label{eq:width-budget}
\log\frac{\rho_0}{\rho_t}
\ll L_0\max_b\frac{W_b}{g_b}.
\end{equation}
\end{lemma}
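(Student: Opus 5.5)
The plan is to take logarithms of the multiplicative increments and telescope.  Since every $\alpha_s\le1$, we have $\log\alpha_t\le0$.  Iterating the hypothesis gives
\[
\alpha_t\ge\alpha_0\prod_{s=1}^t(1+cg_{b_s}),
\]
and hence
\[
\sum_{s=1}^t\log(1+cg_{b_s})\le\log(1/\alpha_0)\le L_0.
\]
Because $0<cg_{b_s}\le c\le1$ (after shrinking $c$ if necessary), the elementary inequality $\log(1+x)\ge x/2$ for $x\in[0,1]$ (indeed $\log(1+x)\ge x\log 2$ there) turns this into
\[
\frac c2\sum_{s}g_{b_s}\le L_0,
\]
which is \eqref{eq:density-budget}.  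The bound $t\ll L_0/g_*$ follows at once, since each summand is at least $g_*$.

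For \eqref{eq:sum-Delta}, bound each step's frequency cost by $|\Delta_s|\le D_{b_s}$ and write
\[
D_{b_s}=\frac{D_{b_s}}{g_{b_s}}\,g_{b_s}\le\Bigl(\max_b\frac{D_b}{g_b}\Bigr)g_{b_s}.
\]
Summing over $s$ and inserting \eqref{eq:density-budget} gives the claim.  The width bound \eqref{eq:width-budget} is proved the same way.  Write
\[
\log\frac{\rho_0}{\rho_t}=\sum_{s=1}^t\log\frac{\rho_{s-1}}{\rho_s},
\]
where each summand is at most $W_{b_s}$.  Bounding $W_{b_s}\le(\max_bW_b/g_b)\,g_{b_s}$ and summing again reduces everything to \eqref{eq:density-budget}.

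The only subtle point, and the one I would check carefully, is the interpretation of $D_b$ and $W_b$ as \emph{uniform} per-step bounds, valid at every step regardless of the current density $\alpha_{s-1}$ and rank $d_{s-1}$.  In the application, the costs in Proposition~\ref{prop:local} depend on $L_s=\Lcal(\alpha_s)\le L_0$, on $\Lambda$, and (for the width) on $\Lcal(\alpha_s/d_s)$.  So before invoking the lemma I would replace these by their worst-case values: $L_0$ for the density terms, and a uniform a priori bound on the final rank for the width term.  Since the density only increases, $L_s\le L_0$ is monotone and harmless.

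The rank dependence in $W_b$ is mildly circular: the width cost involves $d_s$, which is itself controlled by \eqref{eq:sum-Delta}.  I would break the circularity by proving \eqref{eq:sum-Delta} first, since it involves no width terms.  That gives a bound $d_t\le d_0+2^{t}$-type closure growth plus the auxiliary count, and the resulting bound on $d$ then fixes $W_b$ uniformly.  Within the lemma itself no such issue arises, because $D_b$ and $W_b$ are hypotheses.  The proof is thus just the telescoping argument above.
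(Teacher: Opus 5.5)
Your proof is correct and is essentially the paper's argument: you telescope and take logarithms to get $\sum_s g_{b_s}\ll L_0$, then charge each step's cost against its gain via $D_{b_s}\le(\max_b D_b/g_b)\,g_{b_s}$, which is the paper's grouping $\sum_b N_bD_b=\sum_b(N_bg_b)D_b/g_b$ written step by step, and you do the same for the width. One aside outside the lemma: the closure under $\psi_2^{\pm1}$ gives only linear growth along orbits, as in \eqref{eq:orbit-rank}, not $2^t$-type growth, and this matters downstream because a $2^t$ bound would make $\Lcal(\alpha_s/d_s)$ of order $t$ rather than $O(\Lambda)$.
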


\begin{proof}
For $0<x\le1$, $\log(1+cx)\ge c'x$ for an absolute $c'>0$.  Taking logarithms of the product of the density increments and using $\alpha_t\le1$ gives \eqref{eq:density-budget}.  Then
\[
\sum_s|\Delta_s|
\le\sum_b N_bD_b
=\sum_b(N_bg_b)\frac{D_b}{g_b}
\ll L_0\max_b\frac{D_b}{g_b},
\]
which is \eqref{eq:sum-Delta}.  The proof of \eqref{eq:width-budget} is identical.
\end{proof}

We also need the special frequency propagation already observed by Beker \cite[proof of Theorem~3.1]{Beker2025}.  If
\[
\Gamma_s\subseteq\Gamma_{s-1}\cup\{\gamma\psi_2^{\sigma_s}:\gamma\in\Gamma_{s-1}\}\cup\Delta_s,
\]
then, with $\Delta_0$ consisting of one trivial frequency,
\[
\Gamma_t\subseteq
\bigcup_{r=0}^t\{\gamma\psi_2^a:\gamma\in\Delta_r,\ |a|\le t-r\}.
\]
Thus
\begin{equation}\label{eq:orbit-rank}
d_t\le\sum_{r=0}^t(2(t-r)+1)|\Delta_r|.
\end{equation}

\begin{proposition}[Global rank and width]\label{prop:global-budget}
Let the local iteration start from a set of density $\alpha$, and write $L=\Lcal(\alpha)$ and $\Lambda=L+\log(2k)$.  Then the terminal Bohr set may be chosen with rank
\begin{equation}\label{eq:final-rank}
d_t\ll k^{29}L^5\Lambda^5+k^{25}L^{13}
\end{equation}
and width satisfying
\begin{equation}\label{eq:final-width}
\log\frac1{\rho_t}
\ll k^6L\Lambda+k^4L^3.
\end{equation}
\end{proposition}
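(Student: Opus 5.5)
The plan is to feed the per-branch costs of Proposition~\ref{prop:local} into Lemma~\ref{lem:budget} and the orbit bound \eqref{eq:orbit-rank}, with one global bound on the number of steps. The rank estimate comes first, since the width losses in Proposition~\ref{prop:local} depend on the current rank through $\Lcal(\alpha_s/d_s)$.

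\textbf{Step 1: bookkeeping.} Densities only increase along the iteration, so at every step $\Lcal(\alpha_s)\le L$ and $\Lambda_s\le\Lambda$. Hence each branch's costs in Proposition~\ref{prop:local} may be taken with the initial values $L,\Lambda$. Read off the gains $g_U=k^{-5}$, $g_D=k^{-2}$, $g_C=k^{-4}$, $g_G=k^{-1}$. By \eqref{eq:density-budget} the number of steps is $t\ll L/g_*=k^5L$, with the $U$-branch as the bottleneck. The pre-graph edge-discrepancy case has been absorbed into the $C$-class.

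\textbf{Step 2: rank.} From \eqref{eq:orbit-rank} I bound crudely
\[
d_t\le(2t+1)\sum_{r\le t}|\Delta_r|.
\]
The $U$-branch contributes no new frequencies: its $\Gamma'$ lies in $\Gamma\cup\Gamma\psi_2^\sigma$, which the orbit bound already accounts for. So \eqref{eq:sum-Delta} gives
\[
\sum_r|\Delta_r|\ll L\max\Bigl(k^{2}\cdot k^{12}L^4\Lambda^4,\ k^{4}\cdot k^{20}L^3\Lambda^5,\ k\cdot k^{19}L^{11}\Bigr)\ll k^{24}L^4\Lambda^5+k^{20}L^{12}.
\]
Here $k^{14}L^4\Lambda^4\le k^{24}L^4\Lambda^5$, so the $D$-term is absorbed. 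Multiplying by $t\ll k^5L$ gives exactly \eqref{eq:final-rank}. The point is that the large step count comes from the cheap $U$-branch, while the large per-step ranks are weighted only by their own, larger, gains.

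\textbf{Step 3: width.} With \eqref{eq:final-rank} in hand, every intermediate rank satisfies $d_s\le d_t$. Therefore
\[
\Lcal(\alpha_s/d_s)\le L+\log d_t\ll L+\log k+\log L\ll\Lambda,
\]
and also $k\log k\ll k\Lambda$. The per-step width costs therefore become:
\begin{itemize}
\item $W_U,W_D,W_C\ll k\Lambda$;
\item $W_G\ll k\Lambda+k^3L^2$.
\end{itemize}
Applying \eqref{eq:width-budget} with $L_0=L$ gives
\[
\log\frac{\rho_0}{\rho_t}\ll L\max\bigl(k^5\cdot k\Lambda,\ k^2\cdot k\Lambda,\ k^4\cdot k\Lambda,\ k(k\Lambda+k^3L^2)\bigr)\ll k^6L\Lambda+k^4L^3.
\]
Taking the initial width $\rho_0$ as an absolute constant (as at the start of the finite-group iteration with $B=G$) yields \eqref{eq:final-width}.

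\textbf{Main obstacle.} The delicate point is Step 3. The width costs are stated in terms of $\Lcal(\alpha/d)$ at the current step, so I must ensure that the logarithm of the rank never exceeds $O(\Lambda)$, and that the regular-dilate selections performed at each step do not add an uncounted additive loss per step. Any such per-step loss $O(\log(2\widetilde d))$ is itself $O(\Lambda)$ and is charged with the same gain weighting, so it is absorbed into $W_b\ll k\Lambda$. I would verify this explicitly against the width discussion at the end of the proof of Proposition~\ref{prop:local}.
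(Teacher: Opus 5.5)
Your proposal is correct and follows essentially the same route as the paper. It uses the same gain/cost table fed into Lemma~\ref{lem:budget}, the crude orbit bound $d_t\le(2t+1)\sum_r|\Delta_r|$ with $t\ll k^5L$, and the same polynomial-rank bootstrap $\Lcal(\alpha_s/d_s)\ll\Lambda$ for the width sum. One small fix: you have not justified $d_s\le d_t$, but you do not need it, because your Step~2 argument applies verbatim to every prefix of the iteration, so each intermediate $d_s$ already obeys \eqref{eq:final-rank}, which is all the bootstrap requires.
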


\begin{proof}
Let $\alpha_s$ denote the density at step $s$ and put $L_s=\Lcal(\alpha_s)$ and $\Lambda_s=L_s+\log(2k)$.  Since every non-terminal step is a density increment, $\alpha_s\ge\alpha$ for all $s$, and therefore
\[
L_s\le L,\qquad \Lambda_s\le\Lambda.
\]
Consequently every local rank and branch-entropy bound from Proposition~\ref{prop:local} may be majorised uniformly by the same expression with the initial parameters $L$ and $\Lambda$.  We use this monotonicity throughout the following bookkeeping.

The four budget classes in Proposition~\ref{prop:local} have
\[
\begin{array}{c|c|c}
 b & g_b & D_b\\ \hline
 U & k^{-5} & 0\\
 D & k^{-2} & O(k^{12}L^4\Lambda^4)\\
 C & k^{-4} & O(k^{20}L^3\Lambda^5)\\
 G & k^{-1} & O(k^{19}L^{11}).
\end{array}
\]
Consequently
\[
\max_b\frac{D_b}{g_b}
\ll k^{24}L^3\Lambda^5+k^{20}L^{11},
\qquad g_*=k^{-5}.
\]
Lemma~\ref{lem:budget} gives $t\ll k^5L$ and
\[
\sum_s|\Delta_s|
\ll k^{24}L^4\Lambda^5+k^{20}L^{12}.
\]
Using \eqref{eq:orbit-rank} proves \eqref{eq:final-rank}.

For the width, the $U,D,C$ classes have cost $O(k\Lambda)$ after the standard polynomial-rank bootstrap, while the $G$ class has cost $O(k^3L^2+k\Lambda)$.  Thus
\[
\frac{W_U}{g_U}\ll k^6\Lambda,
\quad
\frac{W_D}{g_D}\ll k^3\Lambda,
\quad
\frac{W_C}{g_C}\ll k^5\Lambda,
\quad
\frac{W_G}{g_G}\ll k^4L^2+k^2\Lambda.
\]
Equation \eqref{eq:width-budget} gives \eqref{eq:final-width}.

For completeness, the bootstrap just used is harmless: \eqref{eq:final-rank} shows that every rank encountered is polynomial in $k$ and $L$, and therefore
$\log(2d_s)=O(L+\log(2k))=O(\Lambda)$.  Hence every occurrence of $\Lcal(\alpha_s/d_s)$ in the local width formula is $O(\Lambda)$, as claimed.
\end{proof}

\begin{proof}[Proof of Theorem~\ref{thm:finite-group}]
Represent $G$ as the regular Bohr set $B_0=\Bohr(\{1\};2)$, where $1$ denotes the trivial character, so that the initial rank is one and the local expression $\Lcal(\alpha/d)$ is defined from the first step.  Starting from $(A_0,B_0)=(A,B_0)$, repeatedly apply Proposition~\ref{prop:local}.  If the counting alternative \ref{local-count} occurs, stop.  Otherwise one of the four increment branches occurs, and its relative density gain is at least a constant multiple of $k^{-5}$.  Since the density is at most one, Lemma~\ref{lem:budget} rules out more than $O(k^5L)$ consecutive non-counting steps.  Hence the process terminates after finitely many steps at a pair $(A_t,B_t)$ for which the counting alternative holds.  Since each $A_s$ is contained in a translate of $A_{s-1}$, the terminal set $A_t$ is contained in a translate of the original set, and translation invariance transfers every terminal configuration back to $A$.

The terminal count is therefore at least
\[
\exp\!\bigl(-C(k^2\log k)d_t\Lcal(\alpha_t/d_t)\bigr)\rho_t^{2kd_t}.
\]
By Proposition~\ref{prop:global-budget}, $d_t$ is polynomial in $k$ and $L$, whence $\Lcal(\alpha_t/d_t)=O(\Lambda)$.  The first exponential factor is bounded below by
\[
\exp\!\left(-C\bigl(k^{31}L^5\Lambda^7+k^{27}L^{13}\Lambda^2\bigr)\right),
\]
Since $L\ge\log 2$ and $k\ge2$, we have $\Lambda\ll k^5L$.  Therefore
\[
k^{31}L^5\Lambda^7\ll k^{36}L^6\Lambda^6,
\qquad
k^{27}L^{13}\Lambda^2\ll k^{32}L^{14}\Lambda,
\]
so this entire contribution is absorbed by the right-hand side of \eqref{eq:Phi}.  The factor $\rho_t^{2kd_t}$ contributes an exponent which, by \eqref{eq:final-rank} and \eqref{eq:final-width}, is at most a constant multiple of
\[
\begin{aligned}
&k\bigl(k^{29}L^5\Lambda^5+k^{25}L^{13}\bigr)
\bigl(k^6L\Lambda+k^4L^3\bigr)\\
&\qquad =
 k^{36}L^6\Lambda^6
+k^{34}L^8\Lambda^5
+k^{32}L^{14}\Lambda
+k^{30}L^{16}.
\end{aligned}
\]
This is exactly \eqref{eq:Phi}, and proves \eqref{eq:main-count}.

If $A$ contains no non-degenerate $k$-configuration, every counted tuple has $x_i=x_j$ for some $i\ne j$.  The proportion of such tuples is at most $\binom{k}{2}/|G|$.  Comparing this with \eqref{eq:main-count} and enlarging the absolute constant proves the final assertion.
\end{proof}

\begin{proof}[Proof of Corollary~\ref{cor:interval}]
Set $N'=2N+1$ and embed $[N]$ into the odd cyclic group $\mathbb Z/N'\mathbb Z$.  Exactly as in \cite[proof of Theorem~1.1]{Beker2025}, any non-degenerate $k$-configuration in the image of $A$ lifts to one in $A$, because all relations $x_i+x_j=2z$ occur without modular wraparound.  The image has density at least $\alpha/3$, so Theorem~\ref{thm:finite-group} applies after changing the absolute constant.
\end{proof}

\begin{proof}[Proof of Theorem~\ref{thm:erdos-moser}]
We first record the only place where the exponent from Theorem~\ref{thm:finite-group} enters the Erd\H{o}s--Moser argument.  Let $k$ be sufficiently large and suppose that $X\subseteq Y\subseteq\mathbb Z$ satisfy
\[
|Y|\le(1+k^{-29})|X|
\]
and that no $k$-element subset of $X$ is sum-free with respect to $Y$.  Beker's structural Proposition~4.2, whose proof is inherited from Sanders' argument, then provides $X'\subseteq X$ such that
\[
|X'|\gg k^{-29}|X|,
\qquad
|X'+X'|\ll k^{181}|X'|,
\qquad
(2\cdot X')\cap Y=\varnothing;
\]
see \cite[Proposition~4.2]{Beker2025}.  By the Pl\"unnecke--Ruzsa inequality \cite[Corollary~6.29]{TaoVu2006},
$|2X'-2X'|\ll k^{724}|X'|$.  Ruzsa's Freiman embedding lemma \cite{Ruzsa1994}, in the form used in \cite[Lemma~4.3]{Beker2025}, yields a subset $X''\subseteq X'$ with $|X''|\ge|X'|/2$, an odd integer
$N\ll k^{724}|X'|$, and a Freiman $2$-isomorphism from $X''$ into $\mathbb Z/N\mathbb Z$.  Its image has density $\gg k^{-724}$.

For this density, $L,\Lambda=O(\log k)$, so Theorem~\ref{thm:finite-group} applies whenever
\[
N\ge \exp(k^{36+o(1)}).
\]
Since $N\ge|X''|\gg k^{-29}|X|$, the latter is forced by
$|X|\ge\exp(k^{36+o(1)})$.  We then obtain a non-degenerate $k$-configuration in the cyclic image of $X''$.  A Freiman $2$-isomorphism preserves the relations defining arithmetic means, so this configuration pulls back to distinct generators $x_1,\ldots,x_k\in X''$ whose pairwise sums all belong to $2\cdot X''\subseteq2\cdot X'$.  The last set is disjoint from $Y$, and hence $\{x_1,\ldots,x_k\}$ is sum-free with respect to $Y$, a contradiction.  Thus Beker's Proposition~4.1 is valid with its threshold $\exp(k^{68+o(1)})$ replaced by $\exp(k^{36+o(1)})$; compare \cite[Proposition~4.1 and its proof]{Beker2025}.

The deduction of the Erd\H{o}s--Moser lower bound from this structural proposition is exactly Sanders' iteration \cite[Proposition~2.7 and Theorem~1.2]{Sanders2021}, also recalled in \cite[Section~4]{Beker2025}.  In that deduction a threshold $\exp(k^{\beta+o(1)})$ permits every exponent $c<1/\beta$.  Taking $\beta=36$ proves the theorem.
\end{proof}

\appendix
\section{Technical verification and external inputs}

This appendix records the precise external ingredients used in the main proof and explains why their hypotheses match the branch-sensitive routing.  No new quantitative optimisation is performed here.

\subsection{The two local density-increment backends}

We record the hypotheses of the two results from Beker that are used in Proposition~\ref{prop:local}.  This is useful because the ambient Bohr set, the averaging Bohr sets, and the endpoint densities play different roles.

First, \cite[Corollary~3.5]{Beker2025} states the following.  Let $B=\Bohr(\Gamma;\rho)$ be a regular Bohr set of rank $d_0$, and let $A_1,A_2\subseteq B$ have relative densities at least $\alpha_0>0$.  Let $\eps\in(0,1]$, and let
$B_{\lambda'}$ be a regular dilate with
\[
0<\lambda'\le c\eps\alpha_0/d_0.
\]
If $C\subseteq B_{\lambda'}$ has relative density $\gamma_C>0$ and
\[
\left|\langle\mu_{A_1}\circ\mu_{A_2},\mu_C\rangle-\mu(B)^{-1}\right|
\ge\eps\mu(B)^{-1},
\]
then one of $A_1,A_2$ gains density on a regular Bohr set whose additional rank is
\[
O\!\left(
\eps^{-4}
\Lcal(\min(\eps,\gamma_C))^2
\Lcal(\eps\alpha_0)^2
\Lcal(\alpha_0)^2
\right).
\]
The output width is bounded below by the formula in \cite[Corollary~3.5]{Beker2025}.  In the $D$-type and degree-based centred branches we take $B=B^{(j)}$, $B_{\lambda'}=B^{(i)}$, $d_0=\widetilde d\le2d$, and $\alpha_0=(1-\gamma)\alpha$.  Both endpoints $A_j$ and $2\widetilde A_j$ have density at least $\alpha_0$, and the carrier-scale condition follows from
$\lambda_i/\lambda_j\le\lambda\ll\eps\alpha_0/\widetilde d$.  Thus the hypotheses match literally after this constant-factor relabelling of the endpoint-density parameter.

Second, \cite[Proposition~3.4]{Beker2025} has a different geometry.  Let $B^{(1)}$ and $B^{(2)}=\Bohr(\Gamma_0;\rho_2)$ be regular Bohr sets of the same rank $d_0$ with
\[
B^{(2)}\subseteq (B^{(1)})_{c/d_0}.
\]
Let $A_1,A_2$ be subsets of translates of a common non-empty set $B_0$, with respective densities $\alpha_1,\alpha_2>0$.  If, for some $x\in G$, $\eps\in(0,1]$, and an integer
\[
p\ge C\eps^{-1}\Lcal(\eps),
\]
one has
\[
\|\mu_{A_1}\circ\mu_{A_2}\|_{L^p(\mu_{B^{(1)}}*\mu_{B^{(2)}+x})}
\ge(1+\eps)\mu(B_0)^{-1},
\]
then the designated first endpoint $A_1$ gains density on a regular Bohr set.  Its additional rank satisfies
\[
O\!\left(
\eps^{-2}p^2
\Lcal(\eps\sqrt{\alpha_1\alpha_2})^2
\Lcal(\alpha_1)\Lcal(\alpha_2)
\right),
\]
and its width is at least a constant multiple of
$\rho_2\eps(\alpha_1\alpha_2)^{1/2}/(d_0^3d')$, where $d'$ denotes the additional rank.  The proposition allows the translation $x$ explicitly; equivalently, it may be absorbed into the second endpoint as observed in the proof of \cite[Proposition~3.4]{Beker2025}.

In the positive $G$- and $C$-rectangle branches we take $B_0=B^{(j)}$ and $A_1=2\widetilde A_j$, while $A_2=T$ may be sparse.  We choose regular averaging sets
$B'=B^{(i)}_{\lambda'}$ and $B''=(B')_{\lambda''}$ with
$\lambda',\lambda''\in[c/(2\widetilde d),c/\widetilde d]$, using \cite[Lemma~B.10]{Beker2025}.  Hence $B''\subseteq(B')_{c/\widetilde d}$ and both sets have rank $\widetilde d$.  Lemma~B.11 and Lemma~A.2 of \cite{Beker2025} provide the required transfer from $L^p(\mu_{B^{(i)}})$ to $L^p(\mu_{B'}*\mu_{B''+x})$.  The choices of $p_G$ and $p_C$ in the proof of Proposition~\ref{prop:local} include the extra terms $\theta^{-1}\Lcal(\theta)$ and $\tau^{-1}\Lcal(\tau)$ precisely to ensure the moment lower bound in Proposition~3.4.

\subsection{Compatibility of normalisation, dilation, and the nested Bohr scale}

The local-uniformisation step gives
\[
|\alpha_{ij}-\alpha|\le\frac{\delta}{4}\alpha,
\qquad
|\mu_{B^{(j)}}(2\widetilde A_j)-\alpha|\le\gamma\alpha.
\]
Lemma~\ref{lem:local-normalisation} records the exact conversion between a rectangle-density anomaly and the normalised difference-convolution used by the two Beker backends.  In particular, it is enough to assume $u\ge\delta+4\gamma$.  Since
\[
\frac{\tau}{8}=\frac{1}{8192m^2}>7\delta>\delta+4\gamma,
\]
the low-degree centred branch satisfies this condition.  The same lemma also converts an increment relative to the local endpoint density back to an increment relative to the iterative density $\alpha$ whenever the backend gain dominates $\gamma$; this holds in all $D$-, $C$-, and $G$-branches.

Beker's nested construction starts from
\[
\widetilde B=B\cap(2\cdot B),
\qquad
\widetilde\Gamma=\Gamma\cup\{\xi\psi_2^{-1}:\xi\in\Gamma\},
\qquad
\widetilde d\le2d,
\]
and uses a base ratio
\[
\lambda\asymp\frac{\alpha}{d}k^{-5}.
\]
For every pair $i<j$ the regular nested scales satisfy
\[
(\lambda/2)^{j-i}\le\lambda_i/\lambda_j\le\lambda.
\]
The upper bound gives the applicability condition for Corollary~3.5, because the weakest defect routed through that result has size $\eps\asymp k^{-4}$, while $\lambda$ is smaller by an additional factor of order $k^{-1}$.  The lower bound controls the accumulated width loss and contributes only
$O(k\Lcal(\alpha/d)+k\log k)$.

Finally, multiplication by $2$ is an automorphism because $|G|$ is odd.  If a backend returns an increment on $2\widetilde A_j$, the inverse dilation $x\mapsto x/2$ sends
$\Bohr(\Gamma';\rho')$ to $\Bohr(\Gamma'\psi_2;\rho')$.  Thus rank and width are unchanged, and regularity is preserved because the automorphism preserves the cardinalities of all physical dilates.  This justifies the pullback step and the frequency propagation by powers of $\psi_2$ used later in the iteration.

By contrast, the sparse second endpoints in the positive rectangle branches may have densities as small as $k^{-4}\alpha^3$ in the centred case and exponentially small densities in the $G$-case.  Routing these branches through Corollary~3.5 would force the reference scale to depend on that sparse density.  Proposition~3.4 avoids this: the sparse endpoint density enters its rank and width formulas, but not a carrier-dilate condition.  This is the structural reason for the routing used in Proposition~\ref{prop:local}.

\subsection{Sparse rectangle extraction}

We use \cite[Corollary~2.3]{FHHK2024} in the following form.  If a bipartite graph $f\colon X\times Y\to\{0,1\}$ of density $\rho$ has a $K_{r,p}$ count at least
$(1+\eps)^{rp}\rho^{rp}$, then there are $S\subseteq X$ and $T\subseteq Y$ with
\[
\frac{|S||T|}{|X||Y|}\gg \eps\rho^{rp+1}
\]
and edge density at least $(1+\eps/2)\rho$ on $S\times T$.  In the $G$-branch, $r\le2(k-1)$, $p=P=O(k^2L)$ and $\eps\asymp k^{-1}$, whence each side has logarithmic density $O(k^3L^2)$.

\subsection{The Erd\H{o}s--Moser reduction}

The final application uses no new ingredient beyond the improved finite-group threshold.  Beker's Section~4 combines a structural proposition inherited from the Sudakov--Szemer\'edi--Vu/Shao/Sanders route with Pl\"unnecke--Ruzsa and Ruzsa's Freiman $2$-embedding.  It produces a cyclic model of density $\gg k^{-724}$, so any configuration theorem with threshold $\exp(k^{\beta+o(1)})$ yields the corresponding Erd\H{o}s--Moser exponent $c<1/\beta$ through Sanders' iteration.  Substituting $\beta=36$ gives Theorem~\ref{thm:erdos-moser}.

\section*{Declaration on the use of artificial intelligence}

OpenAI ChatGPT (GPT-5.6 Sol, accessed through a ChatGPT Plus subscription) was used during the preparation of this manuscript to assist with literature organisation, LaTeX drafting and editing, language polishing, and iterative checking of proof details and quantitative bookkeeping.  Outputs from the tool were not treated as authoritative sources and were checked against the cited literature where relevant.  Responsibility for the mathematical content of the manuscript, including all proofs, citations, and conclusions, remains entirely with the human author .

\end{document}